\documentclass{article}

\PassOptionsToPackage{numbers, compress}{natbib}

\usepackage{amsmath}
\usepackage{amssymb}
\usepackage{amsfonts}
\usepackage{amsthm}
\usepackage{mathtools}

\usepackage[final]{neurips_2023}

\usepackage[utf8]{inputenc} 
\usepackage[T1]{fontenc}    
\usepackage{hyperref}       
\usepackage{url}            
\usepackage{booktabs}       
\usepackage{amsfonts}       
\usepackage{nicefrac}       
\usepackage{microtype}      
\usepackage{xcolor}         

\definecolor{blu}{RGB}{50,106,170}
\definecolor{gre}{RGB}{63,129,71}
\definecolor{drd}{RGB}{179,45,38}
\definecolor{pur}{RGB}{81,64,130}
\hypersetup{
  colorlinks=true,
  citecolor=blu,
  linkcolor=drd,
  urlcolor=pur
}

\usepackage[capitalise]{cleveref}
\usepackage{wrapfig}

\newcommand{\diff}{\,\mathrm{d}}
\newcommand{\IWP}{\mathrm{IWP}}
\newcommand{\IOUP}{\mathrm{IOUP}}

\let\phi\varphi
\newtheorem{theorem}{Theorem}
\newtheorem{definition}{Definition}
\newtheorem{proposition}[theorem]{Proposition}
\newtheorem{lemma}[theorem]{Lemma}

\theoremstyle{remark}
\newtheorem{remark}{Remark}

\usepackage{chngcntr}
\usepackage{apptools}
\AtAppendix{\counterwithin{theorem}{section}}

\newcommand{\papertitle}{Probabilistic Exponential Integrators}
\newcommand{\appendixtitle}{%
  \vbox{%
    \hsize\textwidth
    \linewidth\hsize
    \vskip 0.1in
    \hrule height 4pt
    \vskip 0.25in
    \vskip -\parskip%
    \centering
    {\LARGE\bf \papertitle\  --- Appendix\par}
    \vskip 0.29in
    \vskip -\parskip
    \hrule height 1pt
    \vskip 0.09in%
  }
}

\title{Probabilistic Exponential Integrators}

%

\author{%
  Nathanael Bosch \\
  Tübingen AI Center,
  University of Tübingen \\
  \texttt{nathanael.bosch@uni-tuebingen.de} \\
  \AND
  Philipp Hennig \\
  Tübingen AI Center,
  University of Tübingen \\
  \texttt{philipp.hennig@uni-tuebingen.de} \\
  \And
  Filip Tronarp \\
  Lund University \\
  \texttt{filip.tronarp@matstat.lu.se} \\
}

\begin{document}

\maketitle

\begin{abstract}
  Probabilistic solvers provide a flexible and efficient framework for simulation, uncertainty quantification, and inference in dynamical systems.
  However, like standard solvers, they suffer performance penalties for certain \emph{stiff} systems, where small steps are required not for reasons of numerical accuracy but for the sake of stability.
  This issue is greatly alleviated in semi-linear problems by the \emph{probabilistic exponential integrators} developed in this paper.
  By including the fast, linear dynamics in the prior, we arrive at a class of probabilistic integrators with favorable properties.
  Namely, they are proven to be L-stable, and in a certain case reduce to a classic exponential integrator---with the added benefit of
  providing a probabilistic account of the numerical error.
  The method is also generalized to arbitrary non-linear systems
  by imposing piece-wise semi-linearity on the prior
  via Jacobians of the vector field at the previous estimates,
  resulting in \emph{probabilistic exponential Rosenbrock methods}.
  We evaluate the proposed methods on multiple stiff differential equations and demonstrate their improved stability and efficiency over established probabilistic solvers.
  The present contribution thus expands the range of problems that can be effectively tackled within
  probabilistic numerics.
\end{abstract}

\section{Introduction}

Dynamical systems appear throughout science and engineering, and their accurate and efficient simulation is a key component in many scientific problems.
There has also been a surge of interest in the intersection with machine learning, both regarding the usage of machine learning methods to model and solve differential equations
\citep{Raissi2019,Karniadakis2021,Rackauckas2020},
and in a dynamical systems perspective on machine learning methods themselves
\citep{E2017,Chen2018}.
This paper focuses on the numerical simulation of dynamical systems within the framework of \emph{probabilistic numerics},
which treats the numerical solvers themselves as probabilistic inference methods
\citep{hennig15, Hennig2022, Oates2019}.
In particular, we expand the range of problems that can be tackled within this framework and introduce a new class of stable probabilistic numerical methods for
stiff ordinary differential equations (ODEs).

Stiff equations are problems for which certain implicit methods perform much better than explicit ones \citep{HairerII}.
But implicit methods come with increased computational complexity per step, as they typically require solving a system of equations.
\emph{Exponential integrators} are an alternative class of methods for efficiently solving large stiff problems
\citep{VanLoan1978,hochbruck1998exponential,cox2002exponential,hochbruck_ostermann_2010}.
They are based on the observation that, if the ODE has a semi-linear structure, the linear part can be solved exactly and only the non-linear part needs to be numerically approximated.
The resulting methods are formulated in an explicit manner and do not require solving a system of equations, while achieving similar or better stability than implicit methods.
However, such methods have not yet been formulated probabilistically.

\begin{figure}
  \centering
  \includegraphics{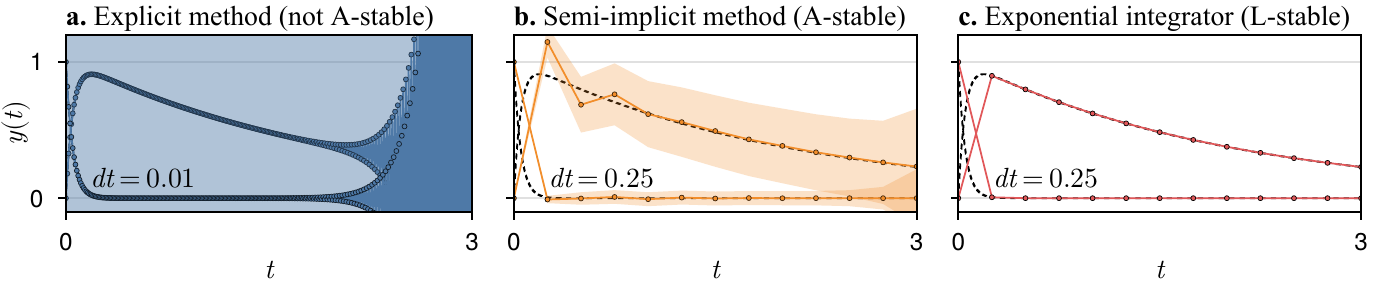}
  \caption{
    \label{fig:stability}
    \emph{Probabilistic numerical ODE solvers with different stability properties}.
    \emph{Left}: The explicit \texttt{EK0} solver with a 3-times integrated Wiener process prior is unstable and diverges from the true solution.
    \emph{Center}: The semi-implicit \texttt{EK1} with the same prior does not diverge even though it uses a larger step size, due to it being A-stable, but it exhibits oscillations in the initial phase of the solution.
    \emph{Right}: The proposed exponential integrator is L-stable and thus does not exhibit any oscillations.
  }
\end{figure}

In this paper we develop \emph{probabilistic exponential integrators},
a new class of probabilistic numerical solvers for stiff semi-linear ODEs.
We build on the \emph{ODE filters} which have emerged as an efficient and flexible class of probabilistic numerical methods for general ODEs
\citep{Schober2019,Kersting2020a,tronarp2019}.
They have known convergence rates
\citep{Kersting2020a,Tronarp2021},
which have also been demonstrated empirically
\citep{Bosch2021,Kramer2020a,Kramer2022},
they are applicable to a wide range of numerical differential equation problems
\citep{kraemer2021bvp,kramer22a,Bosch2022},
their probabilistic output can be integrated into larger inference problems
\citep{kersting20,Schmidt2021,fenrir},
and they can be formulated parallel-in-time
\citep{bosch2023parallelintime}.
But while it has been shown that the choice of underlying Gauss--Markov prior does influence the resulting ODE solver
\citep{magnani2017bayesian,tronarp2019,Bosch2021},
there has not yet been strong evidence for the utility of priors other than the well-established integrated Wiener process.
Probabilistic exponential integrators provide this evidence:
in the probabilistic numerics framework, ``solving the linear part of the ODE exactly'' corresponds to an appropriate choice of prior.

\paragraph{Contributions}
Our main contribution is the development of probabilistic exponential integrators, a new class of stable probabilistic solvers for stiff semi-linear ODEs.
We demonstrate the close link of these methods to classic exponential integrators in \cref{prop:transition-matrix-structure}, provide an equivalence result to a classic exponential integrator in \cref{prop:exp-trapezoidal}, and prove their L-stability in
\cref{prop:l-stability}.
To enable a numerically stable implementation,
we present a quadrature-based approach to directly compute square-roots of the process noise covariance
in \cref{sec:quadrature-trick}.
Finally, in \cref{sec:rosenbrock} we also propose probabilistic exponential Rosenbrock methods for problems in which semi-linearity is not known a priori.
We evaluate all proposed methods on multiple stiff problems and demonstrate the improved stability and efficiency of the probabilistic exponential integrators over existing probabilistic solvers.

\section{Numerical ODE solutions as Bayesian state estimation}
\label{sec:background}

Let us first consider an initial value problem with some general non-linear ODE, of the form
\begin{subequations}
\begin{align}
  \label{eq:ivp:background}
  \dot{y}(t) &= f(y(t), t), \qquad t \in [0, T], \\ y(0) &= y_0,
\end{align}
\end{subequations}
with vector field \(f : \mathbb{R}^{d} \times \mathbb{R} \to \mathbb{R}^d\), initial value \(y_0 \in \mathbb{R}^d\), and time span \([0, T]\).
Probabilistic numerical ODE solvers aim to compute a posterior distribution over the ODE solution \(y(t)\) such that it satisfies the ODE on a discrete set of points
\(\mathbb{T} = \{t_n\}_{n=0}^N \subset [0, T]\), that is
\begin{equation}
  \label{eq:pn-objective}
  p \left( y(t) ~\middle|~ y(0) = y_0, \left\{ \dot{y}(t_n) = f(y(t_n), t_n) \right\}_{n=0}^N \right).
\end{equation}
We call this quantity, and approximations thereof, a \emph{probabilistic numerical ODE solution}.
Probabilistic numerical ODE solvers thus compute not just a single point estimate of the ODE solution, but a posterior distribution which provides a structured estimate of the numerical approximation error.

In the following, we briefly recapitulate the probabilistic ODE filter framework of
\citet{Schober2019} and \citet{tronarp2019}
and define the prior, data model, and approximate inference scheme.
In \cref{sec:method} we build on these foundations to derive the proposed probabilistic exponential integrator.

\subsection{Gauss--Markov prior}
\label{sec:gauss-markov-prior}
\emph{A priori}, we model \(y(t)\) with a Gauss--Markov process, defined by a stochastic differential equation
\begin{equation}
  \label{eq:ltisde}
    \diff Y(t) = A Y(t) \diff t + \kappa B \diff W(t), \qquad Y(0) = Y_0,
\end{equation}
with state \(Y(t) \in \mathbb{R}^{d (q+1)}\), model matrices \(A \in \mathbb{R}^{d(q+1) \times d(q+1)}, B \in \mathbb{R}^{d(q+1) \times d}\), diffusion scaling \(\kappa \in \mathbb{R}\), and smoothness \(q \in \mathbb{N}\).
More precisely, \(A\) and \(B\) are chosen such that the state is structured as
\(Y(t) = [Y^{(0)}(t), \dots, Y^{(q)}(t)]\), and then \(Y^{(i)}(t)\) models the \(i\)-th derivative of \(y(t)\).
The initial value \(Y_0 \in \mathbb{R}^{d (q+1)}\) must be chosen such that it enforces the initial condition, that is, \(Y^{(0)}(0) = y_0\).

One concrete example of such a Gauss--Markov process that is commonly used in the context of probabilistic numerical ODE solvers is the \(q\)-times Integrated Wiener process,
with model matrices
\begin{equation}
  A_{\IWP(d,q)} = \begin{bmatrix}
    0 & I_d & \cdots & 0 \\
    \vdots & \vdots & \ddots & \vdots \\
    0 & 0 & \cdots & I_d \\
    0 & 0 & \cdots & 0
      \end{bmatrix}, \qquad
  B_{\IWP(d,q)} = \begin{bmatrix} 0 \\ \vdots \\ 0 \\ I_d \end{bmatrix}.
\end{equation}
Alternatives include the class of Matérn processes and the integrated Ornstein--Uhlenbeck process
\citep{Tronarp2021}%
---%
the latter plays a central role in this paper and will be discussed in detail later.

\(Y(t)\) satisfies linear Gaussian transition densities of the form
\citep{sarkka_solin_2019}
\begin{equation}
  Y(t+h) \mid Y(t) \sim \mathcal{N} \left( \Phi(h) Y(t), \kappa^2 Q(h) \right),
\end{equation}
with transition matrices \(\Phi(h)\) and \(Q(h)\) given by
\begin{equation}
  \label{eq:transition-matrices}
  \Phi(h) = \exp \left( A h \right), \qquad
  Q(h) = \int_0^h \Phi(h-\tau) B B^\top \Phi^\top(h - \tau) \diff \tau.
\end{equation}
These quantities can be computed with a matrix fraction decomposition
\citep{sarkka_solin_2019}.
For \(q\)-times integrated Wiener process priors, closed-form expressions for the transition matrices are available
\citep{Kersting2020a}.

\subsection{Information operator}
\label{sec:information-operator}
The likelihood, or data model, of a probabilistic ODE solver relates the uninformed prior to the actual ODE solution of interest with an \emph{information operator}
\(\mathcal{I}\)
\citep{cockayne2019},
defined as
\begin{equation}
  \mathcal{I}[Y](t) \coloneqq E_1 Y(t) - f \left( E_0 Y(t), t \right),
\end{equation}
where \(E_i \in \mathbb{R}^{d \times d (q+1)}\) are selection matrices such that \(E_i Y(t) = Y^{(i)}(t)\).
\(\mathcal{I}[Y]\) then captures how well \(Y\) solves the given ODE problem.
In particular, \(\mathcal{I}\) maps the true ODE solution \(y\) to the zero function, i.e. \(\mathcal{I}[y] \equiv 0\).
Conversely, if \(\mathcal{I}[y](t) = 0\) holds for all \(t \in [0, T]\) then \(y\) solves the given ODE.
Unfortunately, it is in general infeasible to solve an ODE exactly and enforce
\(\mathcal{I}[Y](t) = 0\)
everywhere,
which is why numerical ODE solvers typically discretize the time interval and take discrete steps.
This leads to the data model used in most probabilistic ODE solvers
\citep{tronarp2019}:
\begin{equation}
  \label{eq:data-model}
  \mathcal{I}[Y](t_n) = E_1 Y(t_n) - f \left( E_0 Y(t_n), t_n \right) = 0, \qquad t_n \in \mathbb{T} \subset [0, T].
\end{equation}
Note that this specific information operator is closely linked to the IVP considered in \cref{eq:ivp:background}.
By defining a (slightly) different data model we can also formulate
probabilistic numerical IVP solvers for higher-order ODEs or differential-algebraic equations, or encode additional information such as conservation laws or noisy trajectory observations
\citep{Bosch2022,Schmidt2021}.

\subsection{Approximate Gaussian inference}
\label{sec:approximate-gaussian-inference}
The resulting inference problem is described by a Gauss--Markov prior and a Dirac likelihood
\begin{subequations}
\begin{align}
  \label{eq:infprob:prior}
  Y(t_{n+1}) \mid Y(t_n) &\sim \mathcal{N} \left( \Phi_n Y(t_n), \kappa^2 Q_n \right), \\
  Z_n \mid Y(t_n) &\sim \delta \left( E_1 Y(t_n) - f \left( E_0 Y(t_n), t_n \right) \right),
\end{align}
\end{subequations}
with
\(\Phi_n \coloneqq \Phi(t_{n+1} - t_n)\),
\(Q_n \coloneqq Q(t_{n+1} - t_n)\),
initial value \(Y(0) = Y_0\),
discrete time grid \(\{t_n\}_{n=0}^N\), and zero-valued data \(Z_n = 0\) for all \(n\).
The solution of the resulting non-linear Gauss--Markov regression problem can then be efficiently approximated with Bayesian filtering and smoothing techniques
\citep{sarkka_bayesianfilteringandsmoothing}.
Notable examples that have been used to construct probabilistic numerical ODE solvers include
quadrature filters, the unscented Kalman filter, the iterated extended Kalman smoother, or particle filters
\citep{Kersting2016,tronarp2019,Tronarp2021}.
Here, we focus on the well-established extended Kalman filter (EKF).
We briefly discuss the EKF for the given state estimation problem in the following.

\paragraph{Prediction}
Given a Gaussian state estimate \(Y(t_{n-1}) \sim \mathcal{N} \left( \mu_{n-1}, \Sigma_{n-1} \right)\) and the linear conditional distribution as in \cref{eq:infprob:prior},
the marginal distribution \(Y(t_n) \sim \mathcal{N} \left( \mu_n^-, \Sigma_n^- \right)\) is also Gaussian, with
\begin{subequations}
\begin{align}
  \mu_n^- &= \Phi_{n-1} \mu_{n-1}, \\
  \Sigma_n^- &= \Phi_{n-1} \Sigma_{n-1} \Phi_{n-1}^\top + \kappa^2 Q_{n-1}.
\end{align}
\end{subequations}

\paragraph{Linearization}
To efficiently compute a tractable approximation of the true posterior, the EKF linearizes the information operator \(\mathcal{I}\) around the predicted mean \(\mu_n^-\),
i.e.\ \(\mathcal{I}[Y](t_n) \approx H_n Y(t_n) + b_n\),%
\begin{subequations}
  \label{eq:linearization}
  \begin{align}
    H_n &= E_1 - F_y E_0, \\
    b_n &= F_y E_0 \mu_n^- - f(E_0 \mu_n^-, t_n).
  \end{align}
\end{subequations}
An exact linearization with Jacobian \(F_y = \partial_y f (E_0\mu_n^-, t_n)\)
leads to a semi-implicit probabilistic ODE solver, which we call the \texttt{EK1}
\citep{tronarp2019}.
Other choices include the zero matrix \(F_y=0\), which results in the explicit \texttt{EK0} solver
\citep{Schober2019,Kersting2020a},
or a diagonal Jacobian approximation (the \texttt{DiagonalEK1}) which combines some stability benefits of the \texttt{EK1} with the lower computational cost of the \texttt{EK0}
\citep{Kramer2022}.

\paragraph{Correction step}
In the linearized observation model, the posterior distribution of \(Y(t_n)\) given the datum \(Z_n\) is again Gaussian.
Its posterior mean and covariance \((\mu_n, \Sigma_n)\) are given by
\begin{subequations}
  \begin{align}
    S_n &= H_n \Sigma_n^- H_n^\top, \\
    K_n &= \Sigma_n^- H_n^\top S_n^{-1}, \\
    \mu_n &= \mu_n^- - K_n \left( E_1 \mu_n^- - f(E_0 \mu_n^-, t_n) \right), \\
    \Sigma_n &= \left( I - K_n H_n \right) \Sigma_n^-.
  \end{align}
\end{subequations}
This is also known as the \emph{update} step of the EKF.

\paragraph{Smoothing}
To condition the state estimates on all data, the EKF can be followed by a smoothing pass.
Starting with \(\mu_N^S \coloneqq \mu_N\) and \(\Sigma_N^S \coloneqq \Sigma_n\), it consists of the following backwards recursion:
\begin{subequations}
  \begin{align}
    G_n &= \Sigma_n \Phi_n^\top \left( \Sigma_{n+1}^- \right)^{-1}, \\
    \mu_n^S &= \mu_n + G_n (\mu_{n+1}^S - \mu_{n+1}^-), \\
    \Sigma_n^S &= \Sigma_n + G_n (\Sigma_{n+1}^S - \Sigma_{n+1}^-) G_n^\top.
  \end{align}
\end{subequations}


\paragraph{Result}
The above computations result in a \emph{probabilistic numerical ODE solution}
with marginals
\begin{equation}
  \label{eq:marginals}
  p \left( Y(t_i) ~\Big|~ \left\{ E_1 Y(t_n) - f \left( E_0 Y(t_n), t_n \right) = 0 \right\}_{n=0}^N \right)
  \approx \mathcal{N} \left( \mu_i^S, \Sigma_i^S \right),
\end{equation}
which, by construction of the state \(Y\), also contains estimates for the ODE solution as \(y(t) = E_0 Y(t)\).
Since the EKF-based probabilistic solver does not compute only the marginals in \cref{eq:marginals}, but a full posterior distribution for the continuous object \(y(t)\), it can be evaluated for times \(t \notin \mathbb{T}\)
(also known as ``dense output'' in the context of ODE solvers);
it can produce joint samples from this posterior;
and it can be used as a Gauss--Markov prior for subsequent inference tasks
\citep{Schober2019,Bosch2021,fenrir}.

\subsection{Practical considerations and implementation details}
To improve numerical stability and preserve positive-semidefiniteness of the computed covariance matrices, probabilistic ODE solvers typically operate on square-roots of covariance matrices,
defined by a matrix decomposition of the form
\(M = \sqrt{M} \sqrt{M}^\top\)
\citep{Kramer2020a}.
For example, the Cholesky factor is one possible square-root of a positive definite matrix.
But in general, the algorithm does not require the square-roots to be upper- or lower-triangular, or even square.
Additionally, we compute the exact initial state \(Y_0\) from the IVP using Taylor-mode automatic differentiation
\citep{Griewank2008,Kramer2020a},
we compute smoothing estimates with preconditioning
\citep{Kramer2020a},
and we calibrate uncertainties globally with a quasi-maximum likelihood approach
\citep{tronarp2019,Bosch2021}.

\section{Probabilistic exponential integrators}
\label{sec:method}

In the remainder of the paper, unless otherwise stated, we focus on IVPs with a semi-linear vector-field
\begin{equation}
  \label{eq:ivp:semilinear}
  \dot{y}(t) = f(y(t), t) = L y(t) + N(y(t), t).
\end{equation}
Assuming \(N\) admits a Taylor series expansion around \(t\),
the variation of constants formula provides a formal expression of the solution at time \(t + h\):
\begin{equation}
y(t+h)  = \exp(Lh) y(t) + \sum_{k=0}^\infty h^{k+1} \Bigg(
\int_0^1 \exp(Lh (1 - \tau) ) \frac{\tau^k}{k!} \diff \tau \Bigg) \frac{\diff^k }{\diff t^k} N(y(t),t).
\end{equation}
This observation is the starting point for the development of \emph{exponential integrators}
\citep{Minchev2005,hochbruck_ostermann_2010}.
By further defining the so-called \(\varphi\)-functions
\begin{equation}
\varphi_k(z) = \int_0^1 \exp(z(1- \tau)) \frac{\tau^{k-1}}{(k-1)!} \diff \tau, \label{eq:phi-functions} \\
\end{equation}
the above identity of the ODE solution simplifies to
\begin{equation}
  y(t+h)= \exp(Lh) y(t) + \sum_{k=0}^\infty   h^{k+1} \varphi_{k + 1}(Lh) \frac{\diff^k }{\diff t^k} N(y(t),t).
\end{equation}
In this section we develop a class of \emph{probabilistic exponential integrators}.
This is achieved by defining an appropriate class of priors that absorbs the partial linearity, which leads to the integrated Ornstein--Uhlenbeck processes.
\Cref{prop:transition-matrix-structure}
below directly relates this choice of prior to the classical exponential integrators.
\Cref{prop:exp-trapezoidal} demonstrates a direct equivalence between the predictor-corrector form exponential trapezoidal rule and the once integrated Ornstein--Uhlenbeck process.
Furthermore, the favorable stability properties of classical exponential integrators is retained for the probabilistic counterparts as shown in \cref{prop:l-stability}.

\subsection{The integrated Ornstein--Uhlenbeck process}
In \cref{sec:gauss-markov-prior} we highlighted the choice of the \(q\)-times integrated Wiener process prior, which essentially corresponds to modeling the \((q-1)\)-th derivative of the right-hand side \(f\) with a Wiener process.
Here we follow a similar motivation, but only for the non-linear part \(N\).
Differentiating both sides of \cref{eq:ivp:semilinear} \(q-1\) times with respect to \(t\) yields
\begin{equation}
  \label{eq:ivp:semilinear:q-1}
  \frac{\diff^{q-1}}{\diff t^{q-1}} \dot{y}(t) = L \frac{\diff^{q-1}}{\diff t^{q-1}} y(t) + \frac{\diff^{q-1}}{\diff t^{q-1}} N(y(t), t).
\end{equation}
Then, modeling
\(\frac{\diff^{q-1}}{\diff t^{q-1}} N(y(t), t)\)
as a Wiener process and relating the result to \(y(t)\)
gives
\begin{subequations}
  \label{eq:ioup_sde_separate}
  \begin{align}
    \diff y^{(i)} (t) &= y^{(i+1)}(t) \diff t, \\
    \diff y^{(q)} (t) &= L y^{(q)}(t) \diff t + \kappa I_d \diff W^{(q)}(t).
                       \label{eq:ioup_noise_sde}
  \end{align}
\end{subequations}
This process is also known as the \(q\)-times integrated Ornstein--Uhlenbeck process (IOUP), with rate parameter \(L\) and diffusion parameter \(\kappa\).
It can be equivalently stated with the previously introduced notation
(\cref{sec:gauss-markov-prior}),
by defining a state
\(Y(t)\),
as the solution of a linear time-invariant (LTI) SDE
as in \cref{eq:ltisde},
with system matrices
\begin{equation}
  \label{eq:ioup-matrices}
  A_{\IOUP(d,q)} =
  \begin{bmatrix}
    0 & I_d & \cdots & 0 \\
    \vdots & \vdots & \ddots & \vdots \\
    0 & 0 & \cdots & I_d \\
    0 & 0 & \cdots & L
  \end{bmatrix}, \qquad
  B_{\IOUP(d,q)} = \begin{bmatrix} 0 \\ \vdots \\ 0 \\ I_d \end{bmatrix}.
\end{equation}

\begin{figure}
  \centering
  \includegraphics{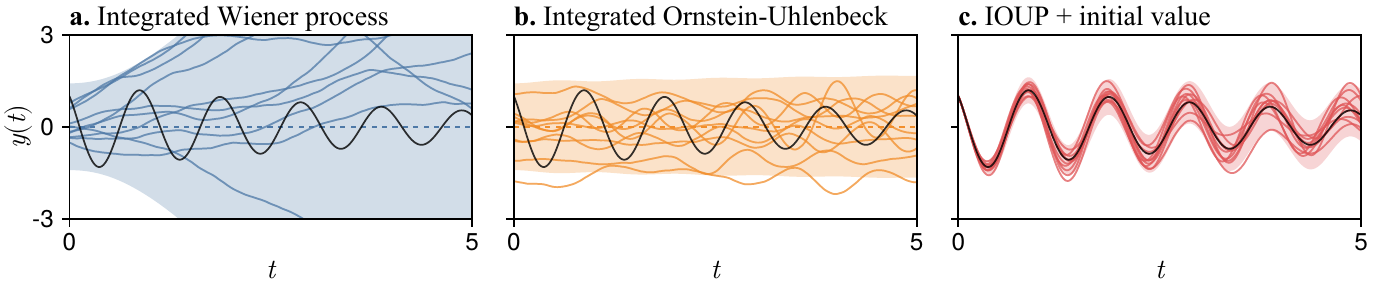}
  \caption{
    \label{fig:prior}
    \emph{Damped oscillator dynamics and priors with different degrees of encoded information}.
    \emph{Left}: Once-integrated Wiener process, a popular prior for probabilistic ODE solvers.
    \emph{Center}: Once-integrated Ornstein--Uhlenbeck process (IOUP) with rate parameter chosen to encode the known linearity of the ODE.
    \emph{Right}: IOUP with both the ODE information and a specified initial value and derivative.
    This is the kind of prior used in the probabilistic exponential integrator.
  }
\end{figure}

\begin{remark}[The mean of the IOUP process solves the linear part of the ODE exactly]
  \label{remark:exact-solutions}
  By taking the expectation of \cref{eq:ioup_noise_sde} and by linearity of integration, we can see that the mean of the IOUP satisfies
  \begin{equation}
    \label{eq:ltisde:mean}
    \dot{\mu}^{(0)}(t) = L \mu^{(0)}(t), \qquad \mu^{(0)}(0) = y_0.
  \end{equation}
  This is in line with the motivation of exponential integrators: the linear part of the ODE is solved exactly, and we only need to approximate the non-linear part.
  \Cref{fig:prior} visualizes this idea.
\end{remark}

\subsection{The transition parameters of the integrated Ornstein--Uhlenbeck process}

Since the process \(Y(t)\) is defined as the solution of a linear time-invariant SDE, it satisfies discrete transition densities
\({p(Y(t + h) \mid Y(t)) = \mathcal{N}\left(\Phi(h) Y(t), \kappa^2 Q(h)\right)}\).
The following result shows that the transition parameters are intimately connected with the \(\varphi\)-functions defined in \cref{eq:phi-functions}.

\begin{proposition}
  \label{prop:transition-matrix-structure}
  The transition matrix of a $q$-times integrated Ornstein--Uhlenbeck process satisfies
  \begin{equation}
    \label{eq:ioup-transition-matrix}
    \Phi(h) = \begin{bmatrix} \exp \!\left( A_{\mathrm{IWP}(d, q-1)} h \right) & \Phi_{12}(h) \\ 0 & \exp(L h) \end{bmatrix},
    \qquad \text{with} \qquad
    \Phi_{12}(h) \coloneqq \begin{bmatrix} h^q \varphi_q(Lh) \\ h^{q-1} \varphi_{q-1}(Lh) \\ \vdots \\ h \varphi_1(Lh) \end{bmatrix}.
  \end{equation}
\end{proposition}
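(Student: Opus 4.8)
The plan is to compute $\Phi(h) = \exp(A_{\IOUP(d,q)}h)$ directly, exploiting the block structure of the drift matrix. Partition the state $Y = [Y^{(0)},\dots,Y^{(q)}]$ into its first $q$ blocks $[Y^{(0)},\dots,Y^{(q-1)}]$ and its last block $Y^{(q)}$. Reading off the dynamics in \eqref{eq:ioup_sde_separate}, the equations $\diff Y^{(i)} = Y^{(i+1)}\diff t$ for $i=0,\dots,q-2$ are exactly the IWP dynamics of order $q-1$, the coupling $\diff Y^{(q-1)} = Y^{(q)}\diff t$ contributes a single block $I_d$, and $\diff Y^{(q)} = LY^{(q)}\diff t + \dots$ gives the bottom-right block $L$, so
\begin{equation*}
  A_{\IOUP(d,q)} = \begin{bmatrix} A_{\IWP(d,q-1)} & C \\ 0 & L \end{bmatrix}, \qquad C = \begin{bmatrix} 0 \\ \vdots \\ 0 \\ I_d \end{bmatrix} \in \mathbb{R}^{dq \times d}.
\end{equation*}
Being block upper triangular, its exponential is block upper triangular with diagonal blocks $\exp(A_{\IWP(d,q-1)}h)$ and $\exp(Lh)$, which already yields the claimed structure apart from $\Phi_{12}(h)$.

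For the off-diagonal block I would use the variation-of-constants identity for the exponential of a block-triangular matrix, namely
\begin{equation*}
  \Phi_{12}(h) = \int_0^h \exp\!\big(A_{\IWP(d,q-1)}(h-s)\big)\, C\, \exp(Ls) \diff s,
\end{equation*}
which follows by checking that the right-hand side, together with the two diagonal blocks, solves $\tfrac{\diff}{\diff h}\Phi(h) = A_{\IOUP(d,q)}\Phi(h)$ with $\Phi(0) = I$ (no commutation between $A_{\IWP(d,q-1)}$ and $L$ is needed). Since $\exp(A_{\IWP(d,q-1)}t)$ is the familiar IWP transition matrix with $(i,j)$-block $\tfrac{t^{j-i}}{(j-i)!}I_d$ for $i\le j$, right-multiplication by $C$ selects its last block-column, so the $i$-th block of the integrand is $\tfrac{(h-s)^{q-1-i}}{(q-1-i)!}\exp(Ls)$, giving
\begin{equation*}
  [\Phi_{12}(h)]_i = \int_0^h \frac{(h-s)^{q-1-i}}{(q-1-i)!} \exp(Ls) \diff s, \qquad i = 0,\dots,q-1.
\end{equation*}

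Finally, substituting $s = h\tau$ yields $[\Phi_{12}(h)]_i = h^{q-i}\int_0^1 \tfrac{(1-\tau)^{q-1-i}}{(q-1-i)!}\exp(Lh\tau)\diff\tau$, and the change of variable $\tau \mapsto 1-\tau$ in the definition \eqref{eq:phi-functions} identifies this integral with $\varphi_{q-i}(Lh)$; hence $[\Phi_{12}(h)]_i = h^{q-i}\varphi_{q-i}(Lh)$, which is exactly the asserted vector as $i$ ranges over $0,\dots,q-1$. I do not expect a genuine obstacle here: the whole argument is bookkeeping, and the only place demanding care is matching the closed-form IWP entries and the two variable substitutions to the index convention in the definition of $\varphi_k$. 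A slightly slicker alternative avoids the Duhamel formula altogether by positing the claimed $\Phi(h)$ and verifying $\dot\Phi = A_{\IOUP(d,q)}\Phi$, $\Phi(0) = I$ directly, using the recurrence $z\,\varphi_{k+1}(z) = \varphi_k(z) - 1/k!$ to differentiate the entries of $\Phi_{12}$.
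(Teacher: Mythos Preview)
Your proposal is correct and follows essentially the same route as the paper: the same block-triangular decomposition of $A_{\IOUP(d,q)}$, the same Duhamel-type integral for $\Phi_{12}(h)$ (which the paper obtains by citing \citet[Theorem~1]{VanLoan1978} rather than verifying $\dot\Phi = A\Phi$ directly), the same use of the closed-form IWP transition entries to extract the $i$th block, and the same change of variables to recognize $\varphi_{q-i}(Lh)$ (the paper does $\tau = h(1-u)$ in one step where you do $s = h\tau$ followed by $\tau \mapsto 1-\tau$).
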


Proof in \cref{proof:prop:transition-matrix-structure}.
Although \cref{prop:transition-matrix-structure} indicates that \(\Phi(h)\) may be computed more efficiently than by calling a matrix-exponential on a $d(q+1) \times d(q+1)$ matrix, this is known to be numerically less stable \citep{sidje1998}.
We therefore compute \(\Phi(h)\) with the standard matrix-exponential formulation.

\paragraph{Directly computing square-roots of the process noise covariance}
\label{sec:quadrature-trick}
Numerically stable probabilistic ODE solvers require a square-root, \(\sqrt{Q(h)}\), of the process noise covariance rather than the full matrix, \(Q(h)\).
For IWP priors this can be computed from the closed-form representation of \(Q(h)\) via an appropriately preconditioned Cholesky factorization \citep{Kramer2020a}.
However, for IOUP priors we have not found an analogous method that works reliably.
Therefore, we compute \(\sqrt{Q(h)}\) directly with numerical quadrature.
More specifically, given a quadrature rule with nodes \(\tau_i \in [0, h]\) and positive weights \(w_i > 0\),
the integral for \(Q(h)\) given in \cref{eq:transition-matrices} is approximated by
\begin{equation}
  Q(h) \approx \sum_{i=1}^m w_i \exp(A (h-\tau_i)) B B^\top \exp(A^\top (h-\tau_i)) \eqqcolon \sum_{i=1}^m M_i,
\end{equation}
with square-roots \(\sqrt{M_i} = \sqrt{w_i} \exp(A (h-\tau_i)) B\) of the summands, which is well-defined since \(w_i>0\).
We can thus compute a square-root representation of the sum with a QR-decomposition
\begin{align}
  X \cdot R = \operatorname{QR} \left( \begin{bmatrix} \sqrt{M_1} & \cdots & \sqrt{M_m} \end{bmatrix}^\top \right).
\end{align}
We obtain \(Q(h) \approx R^\top R\), and therefore an approximate square-root factor is given by \(\sqrt{Q(h)} \approx R^\top\).
Similar ideas have previously been employed for time integration of Riccati equations \cite{stillfjord2015,stillfjord2017}.
We use this quadrature-trick for all IOUP methods, with Gauss--Legendre quadrature on \(m=q\) nodes.

\subsection{Linearization and correction}
\label{sec:expint:linearization}

The information operator of the probabilistic exponential integrator is defined exactly as in \cref{sec:information-operator}.
But since we now assume a semi-linear vector-field
\(f\),
we have an additional option for the linearization:
instead of choosing the exact \(F_y = \partial_y f\) (\texttt{EK1}) or the zero-matrix \(F_y = 0\) (\texttt{EK0}),
a cheap approximate Jacobian is given by the linear part \(F_y = L\).
We denote this approach by \texttt{EKL}.
This is chosen as the default for the probabilistic exponential integrator.
Note that the \texttt{EKL} approach can also be combined with an IWP prior, which
will be serve as an additional baseline in the \cref{sec:experiments}.

\subsection{Equivalence to the classic exponential trapezoidal rule in predict-evaluate-correct mode}

Now that the probabilistic exponential integrator has been defined, we can establish an equivalence result to a classic exponential integrator, similarly to the closely-related equivalence statement by \citet[Proposition 1]{Schober2019} for the non-exponential case.

\begin{proposition}[Equivalence to the PEC exponential trapezoidal rule]
  \label{prop:exp-trapezoidal}
  The mean estimate of the probabilistic exponential integrator with a once-integrated Ornstein--Uhlenbeck prior with rate parameter \(L\) is equivalent to the classic exponential trapezoidal rule in predict-evaluate-correct mode, with the predictor being the exponential Euler method.
  That is, it is equivalent to the scheme
  \begin{subequations}
    \begin{align}
      \label{eq:exptrap:euler}
      \tilde{y}_{n+1}
      &= \phi_0(Lh) y_n + h \phi_1(Lh) N(\tilde{y}_n), \\
      \label{eq:exptrap:trap}
      y_{n+1}
      &= \phi_0(Lh) y_n + h \phi_1(Lh)N(\tilde{y}_n)
      +  h^2 \phi_2(Lh) \frac{N(\tilde{y}_{n+1}) -  N(\tilde{y}_n)}{h},
    \end{align}
  \end{subequations}
  where \cref{eq:exptrap:euler} corresponds to a prediction step with the exponential Euler method, and \cref{eq:exptrap:trap} corresponds to a correction step with the exponential trapezoidal rule.
\end{proposition}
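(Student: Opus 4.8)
The plan is to set $q=1$ and track the forward filtering mean of the \texttt{EKL}-based ODE solver through one predict--correct cycle, identifying the predicted position $E_0\mu_n^-$ with $\tilde y_n$ and the corrected position $E_0\mu_n$ with $y_n$. By \cref{prop:transition-matrix-structure} (and $\varphi_0=\exp$), the transition matrix is $\Phi(h)=\left[\begin{smallmatrix} I_d & h\varphi_1(Lh)\\ 0 & \varphi_0(Lh)\end{smallmatrix}\right]$, and the \texttt{EKL} choice $F_y=L$ makes the observation matrix $H=E_1-LE_0$ constant in $n$. Everything hinges on the $\varphi$-function recurrences $z\varphi_{k+1}(z)=\varphi_k(z)-\tfrac{1}{k!}$, which hold verbatim for matrix arguments since the $\varphi_k$ are entire (so $L$ need not be invertible anywhere). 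From these I would extract three structural facts: (i)~$H\Phi(h)=H$; (ii)~$H\Phi(s)B=I_d$ for every $s\ge 0$; and (iii)~$\int_0^h s\varphi_1(Ls)\diff s = h^2\varphi_2(Lh)$ and $\int_0^h \varphi_0(Ls)\diff s = h\varphi_1(Lh)$.

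First I would pin down the Kalman gain. Writing $Q(h)H^\top=\int_0^h\Phi(s)B\,\bigl(H\Phi(s)B\bigr)^\top\diff s$ and using (ii)--(iii) gives $Q(h)H^\top=\big(h^2\varphi_2(Lh),\,h\varphi_1(Lh)\big)^\top$ (as a stack of two $d\times d$ blocks) and $HQ(h)H^\top=\int_0^h H\Phi(s)B\,\diff s = h I_d$. With the exact initialization $\Sigma_0=0$ used throughout the paper, an induction on $n$ then shows $\Sigma_n^-H^\top=\kappa^2 Q(h)H^\top$, hence $S_n=\kappa^2 h I_d$ and $K_n=\big(h\varphi_2(Lh),\,\varphi_1(Lh)\big)^\top$, for \emph{all} $n\ge 1$: the base case is immediate, and the step uses $\Phi^\top H^\top=H^\top$ (from (i)) together with $\Sigma_nH^\top=(I-K_nH)\Sigma_n^-H^\top=0$, which holds because $HK_n=S_nS_n^{-1}=I$. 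In particular $E_0K_n=h\varphi_2(Lh)$ at every step.

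Next I would assemble the mean recursion. Since $HK_n=I$ and $f=Ly+N$, the correction step satisfies $H\mu_n=N(E_0\mu_n^-)$ for all $n$, i.e.\ $E_1\mu_{n-1}=LE_0\mu_{n-1}+N(E_0\mu_{n-1}^-)$. Substituting this into $E_0\mu_n^-=E_0\mu_{n-1}+h\varphi_1(Lh)\,E_1\mu_{n-1}$ and using $I+hL\varphi_1(Lh)=\varphi_0(Lh)$ yields exactly the exponential-Euler predictor $\tilde y_n=\varphi_0(Lh)y_{n-1}+h\varphi_1(Lh)N(\tilde y_{n-1})$, which is \cref{eq:exptrap:euler}. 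For the corrector, fact (i) gives $H\mu_n^-=H\mu_{n-1}=N(E_0\mu_{n-1}^-)$, so the residual entering the update is $E_1\mu_n^--f(E_0\mu_n^-,t_n)=N(E_0\mu_{n-1}^-)-N(E_0\mu_n^-)$; subtracting $E_0K_n=h\varphi_2(Lh)$ times this residual from $E_0\mu_n^-$ gives $y_n=\tilde y_n+h^2\varphi_2(Lh)\tfrac{N(\tilde y_n)-N(\tilde y_{n-1})}{h}$, which together with the predictor reproduces \cref{eq:exptrap:trap}. A shift of indices matches the stated scheme, and the initialization is consistent because the exact $Y_0$ already satisfies $HY_0=N(y_0)$.

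The hard part is the second step: proving that the position block of the Kalman gain is the \emph{same} matrix $h\varphi_2(Lh)$ at every step, rather than drifting as the covariance recursion runs in. This is precisely what (i) and (ii) buy us: $H\Phi(h)=H$ makes the ``observed'' subspace invariant under the prior flow, while $H\Phi(s)B=I_d$ makes the process noise re-excite that subspace identically at every step, so $\Sigma_n^-H^\top$---and hence $K_n$---is frozen after the first prediction. This is the semi-linear counterpart of the steady-state-gain mechanism behind the non-exponential equivalence result of \citet{Schober2019}; everything else reduces to the routine $\varphi$-function algebra sketched above.
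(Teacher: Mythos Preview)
Your proof is correct and follows the same overall strategy as the paper: establish that $H\Phi(h)=H$, $Q(h)H^\top=\bigl[h^2\varphi_2(Lh);\,h\varphi_1(Lh)\bigr]$, and $HQ(h)H^\top=hI$; use these to show the Kalman gain is frozen at $K_n=\bigl[h\varphi_2(Lh);\,\varphi_1(Lh)\bigr]$; then trace the mean through one predict--correct cycle. The execution differs in two places. First, the paper computes $Q(h)H^\top$ by expanding the integrand block-by-block, whereas you obtain it (and $HQH^\top=\int_0^h I\,\mathrm{d}s=hI$) in one line from your observation $H\Phi(s)B=I_d$. Second, to kill the term $\Phi\Sigma_n\Phi^\top H^\top$ the paper first derives the rank structure $\Sigma_n=\bigl[\begin{smallmatrix}I\\L\end{smallmatrix}\bigr]\Sigma_{00}\bigl[\begin{smallmatrix}I&L^\top\end{smallmatrix}\bigr]$ and multiplies it out, while you use $\Phi^\top H^\top=H^\top$ directly to get $\Phi\Sigma_n\Phi^\top H^\top=\Phi\Sigma_nH^\top=0$ without ever looking inside $\Sigma_n$. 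Finally, the paper's main induction tracks the full mean $\mu_n=\bigl[y_n;\,Ly_n+N(\tilde y_n)\bigr]$ and must verify both blocks of $\mu_{n+1}$, whereas you carry only the invariant $H\mu_n=N(\tilde y_n)$, which encodes the same information and spares you the separate check of $E_1\mu_{n+1}$. The two arguments are equivalent; yours is a touch more economical.
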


The proof is given in \cref{sec:equivalence-proof}.
This equivalence result provides another theoretical justification for the proposed probabilistic exponential integrator.
But note that the result only holds for the mean, while the probabilistic solver computes additional quantities in order to track the solution uncertainty, namely covariances.
These are not provided by a classic exponential integrator.

\subsection{L-stability of the probabilistic exponential integrator}

When solving stiff ODEs, the actual efficiency of a numerical method often depends on its stability.
One such property is \emph{A-stability}: It guarantees that the numerical solution of a decaying ODE will also decay, independently of the chosen step size.
In contrast, explicit methods typically only decay for sufficiently small steps.
In the context of probabilistic ODE solvers, the \texttt{EK0} is considered to be explicit, but the \texttt{EK1} with IWP prior has been shown to be A-stable
\citep{tronarp2019}.
Here, we show that the probabilistic exponential integrator satisfies the stronger \emph{L-stability}:
the numerical solution not only decays, but it decays \emph{fast}, i.e.\ it goes to zero as the step size goes to infinity.
\Cref{fig:stability} visualizes the different probabilistic solver stabilities.
For formal definitions, see for example \citep[Section 8.6]{lambert1973}.

\begin{proposition}[L-stability]
  \label{prop:l-stability}
  The probabilistic exponential integrator is L-stable.
\end{proposition}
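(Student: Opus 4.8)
The plan is to verify the defining inequality of L-stability directly from the update equations of the filter, by specializing the recursion to the scalar linear test equation $\dot y = \lambda y$ with $\operatorname{Re}\lambda < 0$ and tracking how one full step (predict--linearize--correct) acts on the mean. On the test equation the semi-linear splitting becomes $L = \lambda$ and $N \equiv 0$, so the \texttt{EKL} linearization uses $F_y = L = \lambda$, which in this case coincides with the exact Jacobian; the information operator is therefore \emph{exactly} linear, and the extended Kalman filter is an exact Kalman filter. This is the key simplification: there is no linearization error, and the one-step map on the mean is a genuine rational (in fact, by \cref{prop:transition-matrix-structure}, $\phi$-function) expression in $z \coloneqq \lambda h$. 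I would first compute this one-step map explicitly for general $q$, call it $\mu_{n+1} = R(z)\,\mu_n$ componentwise, and then show (i) $|R(z)| \le 1$ whenever $\operatorname{Re} z \le 0$ (A-stability) and (ii) $R(z) \to 0$ as $\operatorname{Re} z \to -\infty$ (the extra condition for L-stability).

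For the concrete computation I would exploit the block-triangular structure of $\Phi(h)$ from \cref{prop:transition-matrix-structure} together with the correction step. The prediction maps the current mean $\mu_n$ to $\mu_n^- = \Phi(h)\mu_n$; because of the $\exp(Lh) = \phi_0(z)$ block and the $\Phi_{12}$ column built from $h^k\varphi_k(Lh)$, the predicted zeroth component is already $\phi_0(z)$ times the old value plus a $\varphi$-weighted combination of the higher states, and on the test equation ($N\equiv 0$) the higher components are consistent with the ODE so that the residual $E_1\mu_n^- - f(E_0\mu_n^-) = E_1\mu_n^- - \lambda E_0 \mu_n^-$ measures exactly the deviation of the state from the one-dimensional invariant subspace $\{Y^{(i)} = \lambda^i Y^{(0)}\}$. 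The correction with gain $K_n$ then projects onto (a neighbourhood of) that subspace. I expect that the stationary filter — i.e., using the fixed-point covariance $\Sigma_\infty$ of the Riccati recursion, which is the relevant object for an asymptotic stability statement — makes the algebra cleanest, and that the resulting $R(z)$ can be identified with a known stable rational/$\varphi$-function approximant (for $q=1$ this should match the Padé-type stability function of the exponential trapezoidal rule via \cref{prop:exp-trapezoidal}, whose stability function is $\phi_0(z) + \tfrac12 z^2\phi_2(z)\cdot(\text{something that vanishes})$, ultimately $\to 0$).

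Concretely the steps are: (1) reduce to the scalar test equation and note $F_y = L$ is exact, so the filter is linear and exact; (2) write the one-step mean map using the block form of $\Phi(h)$ and the Kalman correction, obtaining $R(z)$; (3) to handle the step-independent claim, pass to the steady-state gain by solving the discrete algebraic Riccati equation for the prior $\kappa^2 Q(h)$, $\Phi(h)$, $H$ — or, alternatively, observe monotonicity of the recursion so that bounding the per-step contraction at the stationary point suffices; (4) prove $\sup_{\operatorname{Re} z \le 0}|R(z)| \le 1$ and $\lim_{\operatorname{Re} z\to-\infty} R(z) = 0$, using that $\varphi_k(z) = O(1/|z|)$ as $\operatorname{Re} z\to -\infty$ (from $\phi_{k+1}(z) = (\phi_k(z) - 1/k!)/z$ and $\phi_0(z)=e^z\to 0$). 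The main obstacle I anticipate is step (3)--(4): handling the Riccati fixed point cleanly and making the decay estimate $R(z)\to 0$ rigorous for general $q$, since the gain $K_n$ depends on $z$ through $Q(h)$ in a way that is not obviously benign as $|z|\to\infty$. A safe fallback, and perhaps the cleanest route for the paper, is to prove the general-$q$ A-stability part and then invoke \cref{prop:exp-trapezoidal} to get L-stability rigorously for $q=1$ by citing the classical L-stability of the exponential trapezoidal rule, with the general case argued via the same structural mechanism.
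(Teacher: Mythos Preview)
You are missing the central simplification that makes this a one-line proof. On the scalar test equation $\dot y = \lambda y$ with $L=\lambda$ and $N\equiv 0$, the exactly initialised state $\mu_0 = (y_0,\lambda y_0,\ldots,\lambda^q y_0)^\top$ is an eigenvector of the IOUP drift matrix $A_{\IOUP}$ with eigenvalue $\lambda$ (check directly from \cref{eq:ioup-matrices}). Hence the prediction gives $\mu_1^- = \Phi(h)\mu_0 = e^{\lambda h}\mu_0$, and in particular $E_1\mu_1^- = \lambda E_0\mu_1^-$, so the residual $\hat z_1 = E_1\mu_1^- - \lambda E_0\mu_1^- = 0$. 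The correction therefore does \emph{nothing} to the mean, regardless of the gain $K_n$, the covariance, or any Riccati fixed point. By induction the filter mean stays on this eigenline forever and $y_{n+1} = e^{\lambda h} y_n$, i.e.\ the stability function is exactly $R(z)=e^z$. A- and L-stability are then immediate from $|e^z|\le 1$ on $\mathbb{C}^-$ and $e^z\to 0$ as $\operatorname{Re} z\to -\infty$. This is precisely the content of \cref{remark:exact-solutions} and is the paper's entire argument.

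Your plan goes astray at the point where you treat the correction as a nontrivial projection and expect $R(z)$ to be a complicated $\varphi$-function expression whose decay must be estimated. The Riccati/steady-state machinery, the worry that ``$K_n$ depends on $z$ through $Q(h)$ in a way that is not obviously benign'', and the fallback to $q=1$ via \cref{prop:exp-trapezoidal} are all unnecessary: since the residual is identically zero, $K_n \hat z_{n+1} = 0$ irrespective of how $K_n$ behaves in $z$. In other words, you correctly note that the state starts on the invariant subspace $\{Y^{(i)}=\lambda^i Y^{(0)}\}$ and that the residual measures the deviation from it, but you do not draw the conclusion that this deviation is zero and stays zero, which is what collapses the whole analysis.
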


The full proof is given in \cref{appendix:l-stability}.
The property essentially follows from \cref{remark:exact-solutions} which stated that the IOUP solves linear ODEs exactly. This implies fast decay and gives L-stability.

\subsection{Probabilistic exponential Rosenbrock-type methods}
\label{sec:rosenbrock}

We conclude with a short excursion into exponential Rosenbrock methods
\citep{hochbruck2006explicit,hochbruck2009exponential,Luan2014}:
Given a non-linear ODE
\(\dot{y}(t) = f(y(t), t)\),
exponential Rosenbrock methods perform a continuous linearization of the right-hand side \(f\) around the numerical ODE solution and essentially solve a sequence of IVPs
\begin{subequations}
\begin{align}
  \label{eq:rosenbrock-ivp}
  \dot{y}(t) &= J_n y(t) + \left( f(y(t), t) - J_n y(t) \right), \qquad t \in [t_n, t_{n+1}], \\
  y(t_n) &= y_n,
\end{align}
\end{subequations}
where \(J_n\) is the Jacobian of \(f\) at the numerical solution estimate \(\hat{y}(t_n)\).
This approach enables exponential integrators for problems where the right-hand side \(f\) is not semi-linear.
Furthermore, by automatically linearizing along the numerical solution the linearization can be more accurate,
the Lipschitz-constant of the non-linear remainder becomes smaller,
and the resulting solvers can thus be more efficient than their globally linearized counterparts
\citep{hochbruck2009exponential}.

This can also be done in the probabilistic setting:
By linearizing the ODE right-hand side \(f\) at each step of the solver around the filtering mean \(E_0 \mu_n\), we (locally) obtain a semi-linear problem.
Then, updating the rate parameter of the integrated Ornstein--Uhlenbeck process at each step of the numerical solver results in \emph{probabilistic exponential Rosenbrock-type methods}.
As before, the linearization of the information operator can be done with any of the \texttt{EK0}, \texttt{EK1}, or \texttt{EKL}.
But since here the prediction relies on exact local linearization, we will by default also use an exact \texttt{EK1} linearization.
The resulting solver and its stability and efficiency will be evaluated in the following experiments.

\section{Experiments}
\label{sec:experiments}

In this section we investigate the utility and performance of the proposed probabilistic exponential integrators and compare it to standard non-exponential probabilistic solvers on multiple ODEs.
All methods are implemented in the Julia programming language
\citep{bezanson2017julia},
with special care being taken to implement the solvers in a numerically stable manner, that is, with exact state initialization, preconditioned state transitions, and a square-root implementation
\citep{Kramer2020a}.
Reference solutions are computed with the DifferentialEquations.jl package
\citep{rackauckas2017differentialequations}.
All experiments run on a single, consumer-level CPU.
Code for the implementation and experiments is publicly available on GitHub.%
\footnote{\url{https://github.com/nathanaelbosch/probabilistic-exponential-integrators}}

\subsection{Logistic equation with varying degrees of non-linearity}
\label{sec:experiments:logistic}

We start with a simple one-dimensional initial value problem:
a logistic model with negative growth rate parameter \(r=-1\) and carrying capacity \(K \in \mathbb{R}_+\), of the form
\begin{subequations}
\begin{align}
  \dot{y}(t) &= - y(t) + \frac{1}{K} y(t)^2, \qquad t \in [0, 10], \\
  y(0) &= 1.
\end{align}
\end{subequations}
The non-linearity of this problem can be directly controlled through the parameter \(K\).
Therefore, this test problem lets us investigate the IOUP's capability to leverage known linearity in the ODE.

\begin{figure}
  \centering
  \includegraphics{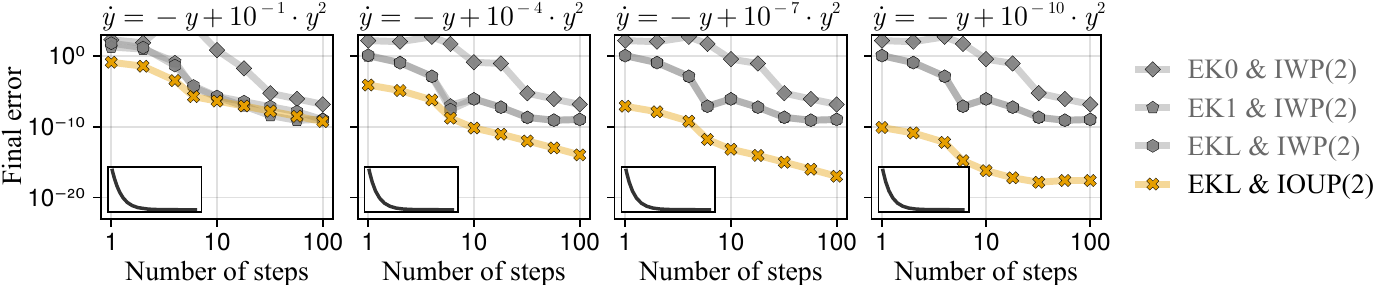}
  \caption{
    \emph{The IOUP prior is more beneficial with increasing linearity of the ODE.}
    In all three examples, the IOUP-based exponential integrator achieves lower error while requiring fewer steps than the IWP-based solvers.
    This effect is more pronounced for the more linear ODEs.
    \label{fig:gradual_nonlinearity:fixed}
  }
\end{figure}

We compare the proposed exponential integrator to all introduced IWP-based solvers,
with different linearization strategies:
\texttt{EK0} approximates \(\partial_y f \approx 0\) (and is thus explicit),
\texttt{EKL} approximates \({\partial_y f \approx -1}\),
and \texttt{EK1} linearizes with the correct Jacobian \(\partial_y f\).
The results for four different values of \(K\) are shown in \cref{fig:gradual_nonlinearity:fixed}.
The explicit solver shows the largest error of all compared solvers, likely due to its lacking stability.
On the other hand, the proposed exponential integrator behaves as expected:
the IOUP prior is most beneficial for larger values of \(K\), and as the non-linearity becomes more pronounced the performance of the IOUP approaches that of the IWP-based solver.
Though for large step sizes, the IOUP outperforms the IWP prior even for the most non-linear case with \(K=10\).

\subsection{Burger's equation}
\label{sec:experiments:burgers}

Here, we consider Burger's equation, which is a semi-linear partial differential equation (PDE)
\begin{equation}
  \label{eq:burgers-pde}
  \partial_t u(x, t) = D \partial_x^2 u(x, t) - u(x, t) \partial_x u(x, t), \qquad x \in [0, 1], \quad t \in [0, 1],
\end{equation}
with diffusion coefficient \(D \in \mathbb{R}_+\).
We transform the problem into a semi-linear ODE
with the method of lines
\citep{Madsen1975,Schiesser2012},
and discretize the spatial domain on \(250\) equidistant points
and approximate the differential operators with finite differences.
The full IVP specification, including all domains, initial and boundary conditions, and additional information on the discretization, is given in \cref{appendix:experiments}.

\begin{figure}
  \centering
  \includegraphics{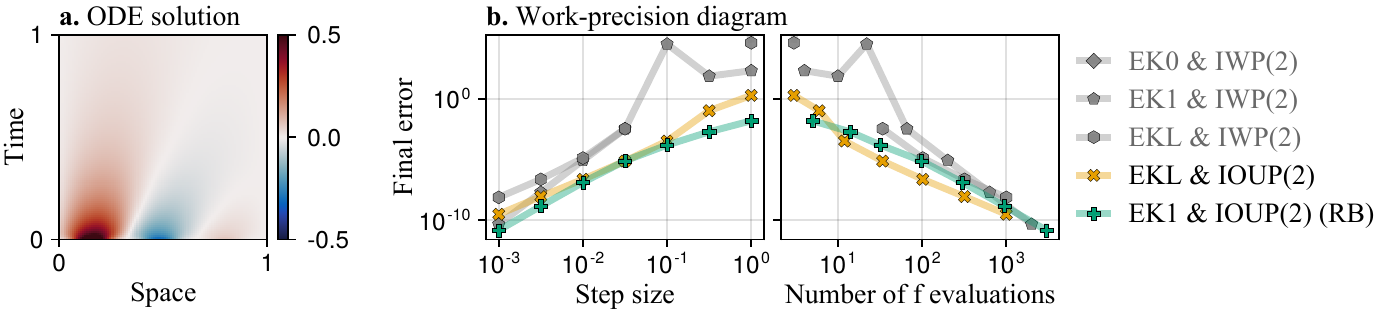}
  \caption{
    \emph{Benchmarking probabilistic ODE solvers on Burger's equation.}
    Exponential and non-exponential probabilistic solvers are compared on Burger's equation (a) in two work-precision diagrams (b).
    Both exponential integrators with IOUP prior achieve lower errors than the existing IWP-based solvers, in particular for large steps.
    This indicates their stronger stability properties.
    \label{fig:burgers}
  }
\end{figure}

The results shown in \cref{fig:burgers} demonstrate the different stability properties of the solvers:
The explicit EK0 with IWP prior is unable to solve the IVP for any of the step sizes due to its insufficient stability,
and even the A-stable EK1 and the more approximate EKL require small enough steps \(\Delta t < 10^{-1}\).
On the other hand, both exponential integrators are able to compute meaningful solutions for a larger range of step sizes.
They both achieve lower errors for most settings than their non-exponential counterparts. 
The second diagram in \cref{fig:burgers} compares the achieved error to the number of vector-field evaluations and points out a trade-off between both exponential methods:
Since the Rosenbrock method additionally computes two Jacobians (with automatic differentiation) per step, it needs to evaluate the vector-field more often than the non-Rosenbrock method.
Thus, for expensive-to-evaluate vector fields the standard probabilistic exponential integrator might be preferable.

\subsection{Reaction-diffusion model}
\label{sec:experiments:reaction-diffusion}

Finally, we consider a discretized reaction-diffusion model
given by a semi-linear PDE
\begin{equation}
  \label{eq:reaction-diffusion-pde}
  \partial_t u(x, t) = D \partial_x^2 u(x, t) + R(u(x, t)), \qquad x \in [0, 1], \quad t \in [0, T],
\end{equation}
where \(D \in \mathbb{R}_+\) is the diffusion coefficient and
\(R(u) = u (1 - u)\) is a logistic reaction term
\citep{kolmogorov1937study}.
A finite-difference discretization of the spatial domain transforms this PDE into an IVP with semi-linear ODE.
The full problem specification is provided in \cref{appendix:experiments}.

\begin{figure}
  \centering
  \includegraphics{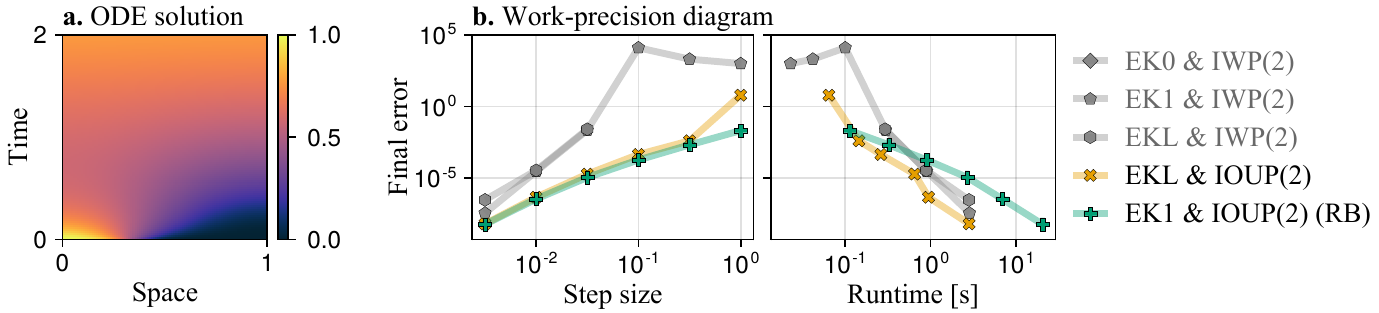}
  \caption{
    \emph{Benchmarking probabilistic ODE solvers on a reaction-diffusion model.}
    Exponential and non-exponential probabilistic solvers are compared on a reaction-diffusion model (a) in two work-precision diagrams (b).
    The proposed exponential integrators with IOUP prior achieve lower errors per step size than the existing IWP-based methods.
    The runtime comparison shows the increased cost of the Rosenbrock-type (RB) method, while the non-Rosenbrock probabilistic exponential integrator performs best in this comparison.
    \label{fig:reaction_diffusion}
  }
\end{figure}

\Cref{fig:reaction_diffusion} shows the results.
We again observe the improved stability of the exponential integrator variants by their lower error for large step sizes,
and they outperform the IWP-based methods on all settings.
The runtime-evaluation in \cref{fig:reaction_diffusion} also visualizes another drawback of the Rosenbrock-type method:
Since the problem is re-linearized at each step, the IOUP also needs to be re-discretized and thus a matrix exponential needs to be computed.
In comparison, the non-Rosenbrock method only discretizes the IOUP prior once at the start of the solve.
This advantage makes the non-Rosenbrock probabilistic exponential integrator the most performant solver in this experiment.

\section{Limitations}
The probabilistic exponential integrator shares many properties of both classic exponential integrators and of other filtering-based probabilistic solvers.
This also brings some challenges.

\paragraph{Cost of computing matrix exponentials}
The IOUP prior is more expensive to discretize than the IWP as it requires computing a matrix exponential.
This trade-off is well-known also in the context of classic exponential integrators.
One approach to reduce computational cost is to compute the matrix exponential only approximately \citep{Moler2003}, for example with Krylov-subspace methods \citep{Hochbruck1997,hochbruck2009exponential}.
Extending these techniques to the probabilistic solver setting thus poses an interesting direction for future work.

\paragraph{Cubic scaling in the ODE dimension}
The probabilistic exponential integrator shares the complexity most (semi-)implicit ODE solvers: while being linear in the number of time steps, it scales cubically in the ODE dimension.
By exploiting structure in the Jacobian and in the prior, some filtering-based ODE solvers have been formulated with linear scaling in the ODE dimension \citep{Kramer2022}.
But this approach does not directly extend to the IOUP-prior.
Nevertheless, exploiting known structure could be particularly relevant to construct solvers for specific ODEs, such as certain discretized PDEs.

\section{Conclusion}
We have presented probabilistic exponential integrators, a new class of probabilistic solvers for stiff semi-linear ODEs.
By incorporating the fast, linear dynamics directly into the prior of the solver, the method essentially solves the linear part exactly, in a similar manner as classic exponential integrators.
We also extended the proposed method to general non-linear systems via iterative re-linearization and presented probabilistic exponential Rosenbrock-type methods.
Both methods have been shown both theoretically and empirically to be more stable than their non-exponential probabilistic counterparts.
This work further expands the toolbox of probabilistic numerics and opens up new possibilities for accurate and efficient probabilistic simulation and inference in stiff dynamical systems.

\begin{ack}
  The authors gratefully acknowledge financial support by the German Federal Ministry of Education and Research (BMBF) through Project ADIMEM (FKZ 01IS18052B), and financial support by the European Research Council through ERC StG Action 757275 / PANAMA; the DFG Cluster of Excellence “Machine Learning - New Perspectives for Science”, EXC 2064/1, project number 390727645; the German Federal Ministry of Education and Research (BMBF) through the Tübingen AI Center (FKZ: 01IS18039A); and funds from the Ministry of Science, Research and Arts of the State of Baden-Württemberg.
  Filip Tronarp was partially supported by the Wallenberg AI, Autonomous Systems and Software Program (WASP) funded by the Knut and Alice Wallenberg Foundation.
  The authors thank the International Max Planck Research School for Intelligent Systems (IMPRS-IS) for supporting Nathanael Bosch.
  The authors also thank Jonathan Schmidt for many valuable discussions and for helpful feedback on the manuscript.
\end{ack}

\bibliographystyle{abbrvnat}
\bibliography{references}

\appendix
\clearpage
\appendixtitle

\section{Proof of \cref{prop:transition-matrix-structure}: Structure of the transition matrix}
\label{proof:prop:transition-matrix-structure}
\begin{proof}[Proof of \cref{prop:transition-matrix-structure}]
  The drift-matrix \(A_{\text{IOUP}(d, q)}\)
  as given in
  \cref{eq:ioup-matrices}
  has block structure
  \begin{equation}
    \label{eq:Aioup-block-structure}
    A_{\text{IOUP}(d, q)} = \begin{bmatrix} A_{\text{IWP}(d, q-1)} & E_{q-1} \\ 0 & L \end{bmatrix},
  \end{equation}
  where \(E_{q-1} \coloneqq \begin{bmatrix} 0 & \dots & 0 & I_d\end{bmatrix}^\top \in \mathbb{R}^{dq \times d}\).
  From \citet[Theorem 1]{VanLoan1978}, it follows
  \begin{equation}
    \Phi(h) = \begin{bmatrix} \exp \!\left( A_{\mathrm{IWP}(d, q-1)} h \right) & \Phi_{12}(h) \\ 0 & \exp(L h) \end{bmatrix},
  \end{equation}
  which is precisely \cref{eq:ioup-transition-matrix}.
  The same theorem also gives $\Phi_{12}(h)$ as
  \begin{equation}
    \Phi_{12}(h) = \int_0^h \exp( A_{\text{IWP}(d, q-1)} (h - \tau)) E_{q-1}^{(d-1)} \exp(L\tau) \diff \tau.
  \end{equation}
  Its $i$th $d \times d$ block is readily given by
  \begin{equation}
    \begin{split}
      (\Phi_{12}(h))_i
      &= \int_0^h E_{i}^\top \exp( A_{\text{IWP}(d, q-1)} (h - \tau)) E_{q-1} \exp(L\tau) \diff \tau \\
      &= \int_0^h \frac{(h-\tau)^{q-1-i}}{(q-1-i)!} \exp(L\tau) \diff \tau \\
      &= h^{q-i} \int_0^1 \frac{\tau^{q-1-i}}{(q-1-i)!} \exp(L h(1 - \tau) ) \diff \tau \\
      &= h^{q-i} \varphi_{q-i}(Lh),
    \end{split}
  \end{equation}
  where the second last equality used the change of variables $\tau = h(1 - u)$,
  and the last line follows by definition.
  %
\end{proof}

\section{Proof of \cref{prop:exp-trapezoidal}: Equivalence to a classic exponential integrator}
\label{sec:equivalence-proof}

We first briefly recapitulate the probabilistic exponential integrator setup for the case of the once integrated Ornstein--Uhlenbeck process, and then provide some auxiliary results.
Then, we prove \cref{prop:exp-trapezoidal} in \cref{sec:equivalence-proof:proof}.

\subsection{The probabilistic exponential integrator with once-integrated Ornstein--Uhlenbeck prior}

The integrated Ornstein--Uhlenbeck process prior with rate parameter \(L\) results in transition densities
\(Y(t+h) \mid Y(t) \sim \mathcal{N} \left( Y(t+h); \Phi(h) Y(t), Q(h) \right)\),
with transition matrices
(from \cref{prop:transition-matrix-structure})
\begin{align}
  \Phi(h) &= \exp(A h) = \begin{bmatrix} I & h \varphi_1(Lh) \\ 0 & \phi_0(Lh) \end{bmatrix}, \\
  Q(h) &= \int_0^h \exp(A \tau) B B^\top \exp(A^\top \tau) \diff \tau \\
          &= \int_0^h
            \begin{bmatrix} I & \tau \varphi_1(L\tau) \\ 0 & \phi_0(L\tau) \end{bmatrix}
            \begin{bmatrix} 0 & 0 \\ 0 & I \end{bmatrix}
            \begin{bmatrix} I & \tau \varphi_1(L\tau) \\ 0 & \phi_0(L\tau) \end{bmatrix}^\top
            \diff \tau \\
          &= \int_0^h
            \begin{bmatrix} \tau^2 \varphi_1(L\tau) \varphi_1(L\tau)^\top & \tau \varphi_1(L\tau) \phi_0(L\tau)^\top \\
              \tau \phi_0(L\tau) \varphi_1(L\tau)^\top & \phi_0(L\tau) \phi_0(L\tau)^\top
            \end{bmatrix} \diff \tau,
\end{align}
where we assume a unit diffusion \(\sigma^2 = 1\).
To simplify notation, we assume an equidistant time grid
\(\mathbb{T} = \{t_n\}_{n=0}^N\) with \(t_n = n \cdot h\) for some step size \(h\), and we denote the constant transition matrices simply by \(\Phi\) and \(Q\) and write \(Y_n = Y(t_n)\).

Before getting to the actual proof, let us also briefly recapitulate the filtering formulas that are computed at each solver step.
Given a Gaussian distribution \(Y_n \sim \mathcal{N} \left( Y_n; \mu_n, \Sigma_n \right)\),
the prediction step computes
\begin{align}
  \mu_{n+1}^- &= \Phi \mu_n, \\
  \Sigma_{n+1}^- &= \Phi(h) \Sigma_n \Phi(h)^\top + Q(h).
\end{align}
Then, the combined linearization and correction step compute
\begin{align}
  \hat{z}_{n+1} &= E_1 \mu_{n+1}^- - f(E_0 \mu_{n+1}^-), \\
  S_{n+1} &= H \Sigma_{n+1}^- H^\top, \\
  K_{n+1} &= \Sigma_{n+1}^- H^\top S_{n+1}^{-1}, \\
  \mu_{n+1} &= \mu_{n+1}^- - K_{n+1} \hat{z}_{n+1}, \\
  \Sigma_{n+1} &= \Sigma_{n+1}^- - K_{n+1} S_{n+1} K_{n+1}^\top,
\end{align}
with observation matrix \(H = E_1 - L E_0 = \begin{bmatrix} -L & I \end{bmatrix}\), since we perform the proposed \texttt{EKL} linearization.

\subsection{Auxiliary results}

In the following, we show some properties of the transition matrices and the covariances that will be needed in the proof of \cref{prop:exp-trapezoidal} later.

First, note that by defining \(\varphi_0(z) = \exp z\), the \(\varphi\)-functions satisfy the following recurrence formula:
\begin{equation}
  z\varphi_k(z) = \varphi_{k-1}(z) - \frac{1}{(k-1)!}.
\end{equation}
See e.g.\ \citet{hochbruck_ostermann_2010}.
This property will be used throughout the remainder of the section.

\begin{lemma}
  \label{prop:QH}
  \label{prop:HQH}
  The transition matrices
  \(\Phi(h), Q(h)\)
  of the once integrated Ornstein--Uhlenbeck process with rate parameter \(L\) satisfy
  \begin{align}
    H \Phi(h) &= \begin{bmatrix} - L & I \end{bmatrix}, \\
    Q(h) H^\top &= \begin{bmatrix} h^2 \phi_2(Lh) \\ h \phi_1(Lh) \end{bmatrix}, \\
    H Q(h) H^\top &= h I,
  \end{align}
\end{lemma}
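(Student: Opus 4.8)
The plan is to prove the three identities in the order stated, using only the explicit block forms of $\Phi(h)$ and $Q(h)$ recorded above, the $\varphi$-recurrence $z\varphi_k(z) = \varphi_{k-1}(z) - \tfrac{1}{(k-1)!}$ (which holds verbatim for a matrix argument, with $\varphi_0 = \exp$), and the fact that any power series in $L\tau$ commutes with $L$.

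First I would compute $H\Phi(h)$ by direct block multiplication: $\begin{bmatrix} -L & I\end{bmatrix}\begin{bmatrix} I & h\varphi_1(Lh) \\ 0 & \phi_0(Lh)\end{bmatrix} = \begin{bmatrix} -L & \phi_0(Lh) - Lh\,\varphi_1(Lh)\end{bmatrix}$. Applying the recurrence at $k=1$ with $z = Lh$ gives $Lh\,\varphi_1(Lh) = \phi_0(Lh) - I$, so the right block collapses to $I$, which is the first identity. Since nothing in this computation is special to the step size, the same argument shows $H\Phi(\tau) = \begin{bmatrix} -L & I\end{bmatrix}$, equivalently $\Phi(\tau)^\top H^\top = H^\top$, for every $\tau$; isolating this step-size-agnostic form is what makes the transposes behave well in the next step.

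For the second identity I would \emph{not} expand the quadratic form in $Q(h)$, but instead write $Q(h) H^\top = \int_0^h \Phi(\tau) B B^\top \Phi(\tau)^\top H^\top \diff\tau$. Using $\Phi(\tau)^\top H^\top = H^\top$ from the previous step, and then $B B^\top H^\top = B$ (immediate from $B = \begin{bmatrix} 0 \\ I\end{bmatrix}$ and the bottom block of $H^\top$ being $I$), this reduces to $\int_0^h \Phi(\tau) B \diff\tau = \int_0^h \begin{bmatrix} \tau\varphi_1(L\tau) \\ \phi_0(L\tau)\end{bmatrix}\diff\tau$. The bottom block is $\int_0^h \phi_0(L\tau)\diff\tau = h\,\varphi_1(Lh)$ and the top block is $\int_0^h \tau\varphi_1(L\tau)\diff\tau = h^2\,\varphi_2(Lh)$; both are instances of the identity $\int_0^h \tfrac{(h-\tau)^{k-1}}{(k-1)!}\exp(L\tau)\diff\tau = h^k\varphi_k(Lh)$ already used in the proof of \cref{prop:transition-matrix-structure} (for the top block one first rewrites $\tau\varphi_1(L\tau) = \int_0^\tau \exp(Lv)\diff v$ via the integral representation of $\varphi_1$ and swaps the order of integration). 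This gives $Q(h)H^\top = \begin{bmatrix} h^2\phi_2(Lh) \\ h\phi_1(Lh)\end{bmatrix}$.

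Finally, the third identity follows by left-multiplying the second by $H = \begin{bmatrix} -L & I\end{bmatrix}$: $H Q(h) H^\top = h\,\varphi_1(Lh) - L h^2\,\varphi_2(Lh)$, and the recurrence at $k=2$ gives $Lh\,\varphi_2(Lh) = \varphi_1(Lh) - I$, so all $\varphi_1$-terms cancel and we are left with $hI$. The only genuinely delicate points are the bookkeeping with transposes (since $L$ is not assumed symmetric) and keeping track of the scaled argument $L\tau$ inside the $\varphi$-functions; both are tamed once the first identity is available in the form $\Phi(\tau)^\top H^\top = H^\top$, after which the remaining manipulations are routine.
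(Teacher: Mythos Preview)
Your proof is correct. The first and third identities follow exactly as in the paper. For the second identity you take a genuinely different and cleaner route: you exploit that the first identity actually reads $H\Phi(\tau)=H$ for \emph{every} $\tau$, transpose it to $\Phi(\tau)^\top H^\top = H^\top$, and push $H^\top$ through the integral representation of $Q(h)$ before any block expansion, reducing the problem to $\int_0^h \Phi(\tau)B\diff\tau$. The paper instead expands the full $2\times 2$ block form of $Q(h)$, multiplies by $H^\top$, and then simplifies the transposed bracket $\phi_0(L\tau)^\top - \tau\varphi_1(L\tau)^\top L^\top$ to the identity via the recurrence; this works but requires tracking transposes of non-symmetric $L$, which your approach sidesteps entirely. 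Both arguments arrive at the same remaining integral $\int_0^h \begin{bmatrix}\tau\varphi_1(L\tau)\\ \phi_0(L\tau)\end{bmatrix}\diff\tau$; the paper evaluates it via the antiderivative identity $\partial_\tau[\tau^k\phi_k(L\tau)] = \tau^{k-1}\phi_{k-1}(L\tau)$, while you unfold $\tau\varphi_1(L\tau)$ as an iterated integral and swap the order of integration to land on the $k=2$ instance of the $\varphi$-integral from \cref{prop:transition-matrix-structure}. Your route is slightly more conceptual (the invariance $H\Phi(\tau)=H$ is doing real work) and avoids the transpose bookkeeping; the paper's route is more self-contained at the level of block entries.
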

\begin{proof}
  \begin{align}
    H \Phi(h)
    = \left( E_1 - L E_0 \right) \begin{bmatrix} I & h \varphi_1(Lh) \\ 0 & \phi_0(Lh) \end{bmatrix}
    = \begin{bmatrix} 0 & \phi_0(Lh) \end{bmatrix} - L \begin{bmatrix} I & h \varphi_1(Lh) \end{bmatrix}
    = \begin{bmatrix} - L & I \end{bmatrix}.
  \end{align}
  \begin{align}
    Q(h) H^\top
    &= \int_0^h
      \begin{bmatrix} \tau^2 \varphi_1(L\tau) \varphi_1(L\tau)^\top & \tau \varphi_1(L\tau) \phi_0(L\tau)^\top \\
        \tau \phi_0(L\tau) \varphi_1(L\tau)^\top & \phi_0(L\tau) \phi_0(L\tau)^\top
      \end{bmatrix} H^\top \diff \tau \\
    &= \int_0^h
      \begin{bmatrix}
        \tau \varphi_1(L\tau) \phi_0(L\tau)^\top - L \tau^2 \varphi_1(L\tau) \varphi_1(L\tau)^\top \\
        \phi_0(L\tau) \phi_0(L\tau)^\top - L \tau \phi_0(L\tau) \varphi_1(L\tau)^\top
      \end{bmatrix} \diff \tau \\
    &= \int_0^h
      \begin{bmatrix}
        \tau \varphi_1(L\tau) \left( \phi_0(L\tau)^\top - L \tau \varphi_1(L\tau)^\top \right) \\
        \phi_0(L\tau) \left( \phi_0(L\tau)^\top - L \tau \varphi_1(L\tau)^\top \right)
      \end{bmatrix} \diff \tau \\
    &= \int_0^h \begin{bmatrix} \tau \varphi_1(L\tau) \\ \phi_0(L\tau) \end{bmatrix} \diff \tau \\
    &= \begin{bmatrix} h^2 \phi_2(Lh) \\ h \phi_1(Lh) \end{bmatrix}
  \end{align}
  where we used \(L\tau\phi_1(L\tau) = \phi_0(L\tau)-I\),
  and \(\partial_\tau \left[\tau^k \phi_k(L \tau) \right] = \tau^{k-1} \phi_{k-1}(L \tau)\).
  It follows that
  \begin{align}
    H Q(h) H^\top
    = H \begin{bmatrix} h^2 \phi_2(Lh) \\ h \phi_1(Lh) \end{bmatrix}
    = h \left(\phi_1(Lh) - Lh \phi_2(Lh) \right)
    = h I,
  \end{align}
  where we used \(L\tau\phi_2(L\tau) = \phi_1(L\tau)-I\).
\end{proof}

\begin{lemma}
  \label{prob:HSigmapH}
  The prediction covariance \(\Sigma_{n+1}^-\) satisfies
  \begin{align}
    \Sigma_{n+1}^- H^\top = Q(h) H^\top.
  \end{align}
\end{lemma}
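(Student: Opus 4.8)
The plan is to unfold the definition of the prediction covariance, $\Sigma_{n+1}^- = \Phi(h)\Sigma_n\Phi(h)^\top + Q(h)$, so that the claimed identity $\Sigma_{n+1}^- H^\top = Q(h) H^\top$ reduces to showing $\Phi(h)\Sigma_n\Phi(h)^\top H^\top = 0$. Since this must hold for the covariance $\Sigma_n$ produced at every step of the filter, the natural route is an induction on $n$: I will establish the stronger invariant that the filtering covariance satisfies $\Sigma_n H^\top = 0$ (equivalently $\Sigma_n E_1^\top = L^\top \Sigma_n E_0^\top$, i.e.\ the "top row minus $L$ times" combination of $\Sigma_n$ vanishes), and show that this property is preserved by one predict–correct cycle and that $\Phi(h)^\top H^\top$ lies in a subspace that annihilates such a $\Sigma_n$.

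First I would look at $\Phi(h)^\top H^\top$. Using Lemma \ref{prop:QH} we already have $H\Phi(h) = \begin{bmatrix} -L & I\end{bmatrix} = H$, so $\Phi(h)^\top H^\top = H^\top$; hence $\Phi(h)\Sigma_n\Phi(h)^\top H^\top = \Phi(h)\Sigma_n H^\top$, and if $\Sigma_n H^\top = 0$ then this term vanishes and the lemma follows immediately for that $n$. So the whole argument collapses to the inductive claim $\Sigma_n H^\top = 0$ for all $n$. For the base case, the exact initialization sets $Y_0$ deterministically (the initial state is computed exactly via Taylor-mode differentiation), so $\Sigma_0 = 0$ and the claim holds trivially; alternatively one checks the claim directly after the first correction step.

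For the inductive step, assume $\Sigma_n H^\top = 0$. Then by the computation above $\Sigma_{n+1}^- H^\top = Q(h) H^\top$, which is exactly the lemma's statement at index $n$. It remains to show the corrected covariance again annihilates $H^\top$: with $S_{n+1} = H\Sigma_{n+1}^- H^\top = H Q(h) H^\top = hI$ (again Lemma \ref{prop:QH}) and $K_{n+1} = \Sigma_{n+1}^- H^\top S_{n+1}^{-1}$, we get
\begin{equation}
  \Sigma_{n+1} H^\top = \Sigma_{n+1}^- H^\top - K_{n+1} S_{n+1} K_{n+1}^\top H^\top
  = \Sigma_{n+1}^- H^\top - \Sigma_{n+1}^- H^\top S_{n+1}^{-1} \left(H \Sigma_{n+1}^- H^\top\right)
  = \Sigma_{n+1}^- H^\top - \Sigma_{n+1}^- H^\top = 0,
\end{equation}
closing the induction. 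The main thing to be careful about is the logical ordering: the identity $\Sigma_{n+1}^- H^\top = Q(h)H^\top$ is genuinely the claim being proved, so I must make sure the induction hypothesis is the corrected-covariance statement $\Sigma_n H^\top = 0$ and not the predicted-covariance statement, and that the base case is handled by the exact initialization. The only potential obstacle is confirming that the implementation really does initialize exactly (so $\Sigma_0 = 0$); if one instead allows a nonzero initial covariance, one needs that initial covariance to satisfy $\Sigma_0 H^\top = 0$, which should be imposed as part of constructing $Y_0$ from the IVP.
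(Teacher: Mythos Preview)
Your proposal is correct and rests on the same two ingredients as the paper's proof: the identity \(H\Phi(h)=H\) from \cref{prop:QH}, which reduces \(\Phi(h)\Sigma_n\Phi(h)^\top H^\top\) to \(\Phi(h)\Sigma_n H^\top\), and the fact that the filtering covariance satisfies \(\Sigma_n H^\top = 0\). The only organizational difference is that you package the second fact as an induction anchored at \(\Sigma_0 = 0\), whereas the paper observes directly that any noiseless Kalman correction yields \(H\Sigma_n = 0\) (your own computation of \(\Sigma_{n+1}H^\top = 0\) in fact shows exactly this, and does not use the induction hypothesis, so your induction is harmless but not strictly needed beyond the base case). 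The paper then takes a small detour, deriving the block form \(\Sigma_n = \begin{bmatrix} I \\ L \end{bmatrix}\Sigma_{00}\begin{bmatrix} I & L^\top\end{bmatrix}\) before multiplying by \(H^\top\); your route via \(\Phi(h)^\top H^\top = H^\top\) and \(\Sigma_n H^\top = 0\) is a bit more direct and avoids that intermediate structural claim.
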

\begin{proof}

  First, since the observation model is noiseless,
  the filtering covariance \(\Sigma_n\) satisfies
  \begin{align}
    \label{eq:hsigma}
    H \Sigma_n = \begin{bmatrix} 0 & 0 \end{bmatrix}.
  \end{align}
  This can be shown directly from the correction step formula:
  \begin{align}
    H \Sigma_{n}
    & = H \Sigma_{n}^- - H K_{n} S_{n} K_{n}^\top \\
    & = H \Sigma_{n}^- - H \left(\Sigma_{n}^- H^\top S_{n}^{-1} \right) S_{n} K_{n}^\top \\
    & = H \Sigma_{n}^- - H \Sigma_{n}^- H^\top \left( H \Sigma_{n}^- H^\top \right)^{-1} S_{n} K_{n}^\top \\
    & = H \Sigma_{n}^- - I S_{n} K_{n}^\top \\
    & = H \Sigma_{n}^- - S_{n} \left(\Sigma_{n}^- H^\top S_{n}^{-1} \right)^\top \\
    & = H \Sigma_{n}^- - S_{n} S_{n}^{-1} H \Sigma_{n}^- \\
    & = \begin{bmatrix} 0 & 0 \end{bmatrix}.
  \end{align}

  Next, since the observation matrix is \(H = \begin{bmatrix} -L & I \end{bmatrix}\), the filtering covariance
  \(\Sigma_n\)
  is structured as
  \begin{align}
    \Sigma_{n} = \begin{bmatrix} I \\ L \end{bmatrix} [\Sigma_{n}]_{00} \begin{bmatrix} I & L^\top \end{bmatrix}.
  \end{align}
  This can be shown directly from \cref{eq:hsigma}:
  \begin{align}
    \begin{bmatrix} 0 & 0 \end{bmatrix}
    &= H \Sigma
      = \begin{bmatrix} -L & I \end{bmatrix}
      \begin{bmatrix} \Sigma_{00} & \Sigma_{01} \\ \Sigma_{10} & \Sigma_{11} \end{bmatrix}
      = \begin{bmatrix} \Sigma_{10} - L \Sigma_{00} & \Sigma_{11} - L \Sigma_{01} \end{bmatrix},
  \end{align}
  and thus
  \begin{align}
    \Sigma_{10} &= L \Sigma_{00}, \\
    \Sigma_{11} &= L \Sigma_{01} = L \Sigma_{10}^\top = L \Sigma_{00} L^\top.
  \end{align}
  It follows
  \begin{align}
    \label{eq:filtcovstructure}
    \Sigma = \begin{bmatrix} \Sigma_{00} & L \Sigma_{00} \\ \Sigma_{00} L^\top & L \Sigma_{00} L^\top \end{bmatrix}
    = \begin{bmatrix} I \\ L \end{bmatrix} \Sigma_{00} \begin{bmatrix} I & L^\top \end{bmatrix}.
  \end{align}

  Finally, together with
  \cref{prop:HQH}
  we can derive the result:
  \begin{align}
    \Sigma_{n+1}^- H^\top
    &= \Phi(h) \Sigma_n \Phi(h)^\top H^\top + Q(h) H^\top \\
    &= \Phi(h)
      \begin{bmatrix} I \\ L \end{bmatrix} \bar{\Sigma}_{n} \begin{bmatrix} I & L^\top \end{bmatrix}
      \begin{bmatrix} -L^\top \\ I \end{bmatrix}
      + Q(h) H^\top \\
    &= \Phi(h)
      \begin{bmatrix} I \\ L \end{bmatrix} \bar{\Sigma}_{n} \cdot 0
      + Q(h) H^\top \\
    &= Q(h) H^\top.
  \end{align}
\end{proof}

\subsection{Proof of \cref{prop:exp-trapezoidal}}
\label{sec:equivalence-proof:proof}

With these results, we can now prove \cref{prop:exp-trapezoidal}.

\begin{proof}[Proof of \cref{prop:exp-trapezoidal}]
  We prove the proposition by induction, showing that the filtering means are all of the form
  \begin{align}
    \mu_n &:= \begin{bmatrix} y_n \\ L y_n + N(\tilde{y}_n) \end{bmatrix},
  \end{align}
  where \(y_n, \tilde{y}_n\) are defined as
  \begin{align}
    \tilde{y}_0 &:= y_0, \\
    \tilde{y}_{n+1} &:= \phi_0(Lh) y_n + h \phi_1(Lh) N(\tilde{y}_n), \\
    y_{n+1} &:= \phi_0(Lh) y_n + h \phi_1(Lh) N(\tilde{y}_n) - h \phi_2(Lh) \left(N(\tilde{y}_n) - N(\tilde{y}_{n+1}) \right).
  \end{align}
  This result includes the statement of \cref{prop:exp-trapezoidal}.

  \paragraph{Base case \(n=0\)}
  The initial distribution of the probabilistic solver is chosen as
  \begin{align}
    \mu_0 = \begin{bmatrix} y_0 \\ L y_0 + N(\tilde{y}_0) \end{bmatrix},
    \Sigma_0 = 0.
  \end{align}
  This proves the base case \(n=0\).

  \paragraph{Induction step \(n \to n+1\)}
  Now, let
  \begin{align}
    \mu_n &= \begin{bmatrix} y_n \\ L y_n + N(\tilde{y}_n) \end{bmatrix}
  \end{align}
  be the filtering mean at step \(n\) and \(\Sigma_n\) be the filtering covariance.
  The prediction mean is of the form
  \begin{align}
    \mu_{n+1}^-
    &= \Phi(h) \mu_n = \begin{bmatrix} y_n + h \varphi_1(Lh) (L y_n + N(\tilde{y}_n)) \\ \phi_0(Lh) (Ly_n + N(\tilde{y}_n)) \end{bmatrix}
      = \begin{bmatrix} \phi_0(Lh) y_n + h \phi_1(Lh) N(\tilde{y}_n) \\ \phi_0(Lh) (Ly_n + N(\tilde{y}_n)) \end{bmatrix}.
  \end{align}

  The residual \(\hat{z}_{n+1}\) is then of the form
  \begin{align}
    \hat{z}_{n+1}
    &= E_1 \mu_{n+1}^- - f(E_0 \mu_{n+1}^-) \\
    &= \phi_0(Lh) (Ly_n + N(\tilde{y}_n)) - f \left( \phi_0(Lh) y_n + h \phi_1(Lh) N(\tilde{y}_n) \right) \\
    &= \phi_0(Lh) (Ly_n + N(\tilde{y}_n)) - L \left( \phi_0(Lh) y_n + h \phi_1(Lh) N(\tilde{y}_n) \right) - N \left( \tilde{y}_{n+1} \right) \\
    &= \phi_0(Lh) Ly_n + \phi_0(Lh) N(\tilde{y}_n) - L\phi_0(Lh) y_n - Lh \phi_1(Lh) N(\tilde{y}_n) - N \left( \tilde{y}_{n+1} \right) \\
    &= \left( \phi_0(Lh) - Lh \phi_1(Lh) \right) N(\tilde{y}_n) - N \left( \tilde{y}_{n+1} \right) \\
    &= N(\tilde{y}_n) - N \left( \tilde{y}_{n+1} \right), \\
  \end{align}
  where we used properties of the \(\varphi\)-functions, namely \(Lh \phi_1(Lh) = \phi_0(Lh)\) and the commutativity \(\phi_0(Lh) L = L \phi_0(Lh)\).
  With \cref{prob:HSigmapH}, the residual covariance \(S_{n+1}\) and Kalman gain \(K_{n+1}\) are then of the form
  \begin{align}
    S_{n+1}
    &= H \Sigma_{n+1}^- H^\top = H Q(h) H^\top = hI, \\
    K_{n+1}
    &= \Sigma_{n+1}^- H^\top S_{n+1}^{-1} = Q(h) H^\top \left( h I \right)^{-1} = \begin{bmatrix} h \phi_2(Lh) \\ \phi_1(Lh) \end{bmatrix}.
  \end{align}
  This gives the updated mean
  \begin{align}
    \mu_{n+1}
    &= \mu_{n+1}^- - K_{n+1} \hat{z}_{n+1} \\
    &= \begin{bmatrix} \phi_0(Lh) y_n + h \phi_1(Lh) N(\tilde{y}_n) \\ \phi_0(Lh) (Ly_n + N(\tilde{y}_n)) \end{bmatrix}
      - \begin{bmatrix} h \phi_2(Lh) \\ \phi_1(Lh) \end{bmatrix} \left( N(\tilde{y}_n) - N(\tilde{y}_{n+1}) \right) \\
    &= \begin{bmatrix}
         \phi_0(Lh) y_n + h \phi_1(Lh) N(\tilde{y}_n) - h \phi_2(Lh) \left( N(\tilde{y}_n) - N(\tilde{y}_{n+1}) \right) \\
         \phi_0(Lh) (Ly_n + N(\tilde{y}_n)) - \phi_1(Lh) \left( N(\tilde{y}_n) - N(\tilde{y}_{n+1}) \right)
       \end{bmatrix}.
  \end{align}
  This proves the first half of the mean recursion:
  \begin{equation}
    E_0 \mu_{n+1} = \phi_0(Lh) y_n + h \phi_1(Lh) N(\tilde{y}_n) - h \phi_2(Lh) \left( N(\tilde{y}_n) - N(\tilde{y}_{n+1}) \right) = y_{n+1}.
  \end{equation}
  It is left to show that
  \begin{align}
    E_1 \mu_{n+1} &= L y_{n+1} - N(\tilde{y}_{n+1}).
  \end{align}
  Starting from the right-hand side, we have
  \begin{align}
    L &y_{n+1} + N(\tilde{y}_{n+1}) \\
      &= L \left(\phi_0(Lh) y_n + h \phi_1(Lh) N(\tilde{y}_n) - h \phi_2(Lh) \left( N(\tilde{y}_n) - N(\tilde{y}_{n+1}) \right) \right) + N(\tilde{y}_{n+1}) \\
      &= \phi_0(Lh) L y_n + Lh \phi_1(Lh) N(\tilde{y}_n) - Lh \phi_2(Lh) \left( N(\tilde{y}_n) - N(\tilde{y}_{n+1}) \right) N(\tilde{y}_{n+1}) \\
      &= \phi_0(Lh) L y_n + (\phi_0(Lh) - I) N(\tilde{y}_n) - (\phi_1(Lh) - I) \left( N(\tilde{y}_n) - N(\tilde{y}_{n+1}) \right) N(\tilde{y}_{n+1}) \\
      &= \phi_0(Lh) (L y_n + N(\tilde{y}_n)) - \phi_1(Lh) (N(\tilde{y}_n) - N(\tilde{y}_{n+1})) \\
      &= E_1 \mu_{n+1}.
  \end{align}
  This concludes the proof of the mean recursion and thus shows the equivalence of the two recursions.
\end{proof}

\section{Proof of \cref{prop:l-stability}: L-stability}
\label{appendix:l-stability}

We first provide definitions of L-stability and A-stability, following
\citep[Section 8.6]{lambert1973}.

\begin{definition}[L-stability]
  A one-step method is said to be L-stable if it is A-stable and, in addition, when applied to the scalar test-equation \(\dot{y}(t) = \lambda y(t)\), \(\lambda \in \mathbb{C}\) a complex constant with \(\operatorname{Re}(\lambda) < 0\), it yields \(y_{n+1} = R(h \lambda) y_n\), and \(R(h\lambda) \to 0\) as \(\operatorname{Re}(h\lambda) \to - \infty\).
\end{definition}

\begin{definition}[A-stability]
  A one-step method is said to be A-stable if its region of absolute stability contains the whole of the left complex half-plane.
  That is, when applied to the scalar test-equation \(\dot{y}(t) = \lambda y(t)\) with \(\lambda \in \mathbb{C}\) a complex constant with \(\operatorname{Re}(\lambda) < 0\), the method yields \(y_{n+1} = R(h \lambda) y_n\), and
  \(\{z \in \mathbb{C} : \operatorname{Re}(z) < 0 \} \subset \{z \in \mathbb{C} : R(z) < 1\}\).
\end{definition}

\begin{proof}[Proof of \cref{prop:l-stability}]
  Both L-stability and A-stability directly follow from \cref{remark:exact-solutions}:
  Since the probabilistic exponential integrator solves linear ODEs exactly
  its stability function is the exponential function, i.e.\ \(R(z) = \exp(z)\).
  A-stability and L-stability then follow:
  Since \(\mathbb{C}^- \subset \{z : |R(z)| \leq 1\}\) holds the method is A-stable.
  And since \(|R(z)| \to 0\) as \(\operatorname{Re}(z) \to -\infty\) the method is L-stable.
\end{proof}

\section{Experiment details}
\label{appendix:experiments}

\subsection{Burger's equation}
Burger's equation is a semi-linear partial differential equation (PDE) of the form
\begin{equation}
  \label{eq:burgers-pde}
  \partial_t u(x, t) = - u(x, t) \partial_x u(x, t) + D \partial_x^2 u(x, t), \qquad x \in \Omega, \quad t \in [0, T],
\end{equation}
with diffusion coefficient \(D \in \mathbb{R}_+\).
We discretize the spatial domain \(\Omega\) on a finite grid and approximate the spatial derivatives with finite differences to obtain a semi-linear ODE of the form
\begin{equation}
  \label{eq:burgers-ode}
  \dot{y}(t) = D \cdot L \cdot y(t) + F(y(t)), \qquad t \in [0, T],
\end{equation}
with \(N\)-dimensional \(y(t) \in \mathbb{R}^N\),
\(L \in \mathbb{R}^{N \times N}\) the finite difference approximation of the Laplace operator \(\partial_x^2\),
and a non-linear part \(F\).

More specifically, we consider a domain \(\Omega = (0, 1)\), which we discretize with a grid of \(N=250\) equidistant locations, thus we have \(\Delta x = 1/N\).
We consider zero-Dirichlet boundary conditions, that is, \(u(0, t) = u(1, t) = 0\).
The discrete Laplacian is then
\begin{equation}
  [L]_{ij} = \frac{1}{\Delta x^2} \cdot \begin{cases}
    -2 & \text{if } i = j, \\
    1 & \text{if } i = j \pm 1, \\
    0 & \text{otherwise}.
  \end{cases}
\end{equation}
The non-linear part of the discretized Burger's equation results from another finite-difference approximation of the term \(u \cdot \partial_x u\), and is chosen as
\begin{equation}
  [F(y)]_i = \frac{1}{4 \Delta x} \begin{cases}
  y_{2}^2& \text{if } i=1, \\
  y_{d-1}^2& \text{if } i=d, \\
  y_{i+1}^2 - y_{i-1}^2 & \text{else.}
  \end{cases}
\end{equation}
The initial condition is chosen as
\begin{equation}
  u(x, 0) = \sin(3\pi x)^3 (1 - x)^{3/2}.
\end{equation}
We consider an integration time-span \(t \in [0, 1]\), and choose a diffusion coefficient \(D=0.075\).

\subsection{Reaction-diffusion model}
The reaction-diffusion model presented in the paper, with logistic reaction term, has been used to describe the growth and spread of biological populations
\citep{kolmogorov1937study}.
It is given by a semi-linear PDE
\begin{equation}
  \label{eq:reaction-diffusion-pde}
  \partial_t u(x, t) = D \partial_x^2 u(x, t) + R(u(x, t)), \qquad x \in \Omega, \quad t \in [0, T],
\end{equation}
where \(D \in \mathbb{R}_+\) is the diffusion coefficient and
\(R(u) = u (1 - u)\) is a logistic reaction term.
We discretize the spatial domain \(\Omega\) on a finite grid and approximate the spatial derivatives with finite differences, and obtain a semi-linear ODE of the form
\begin{equation}
  \label{eq:burgers-ode}
  \dot{y}(t) = D \cdot L \cdot y(t) + R(y(t)), \qquad t \in [0, T],
\end{equation}
with \(N\)-dimensional \(y(t) \in \mathbb{R}^N\),
\(L \in \mathbb{R}^{N \times N}\) the finite difference approximation of the Laplace operator,
and the reaction term \(R\) is as before but applied element-wise.

We again consider a domain \(\Omega = (0, 1)\), which we discretize on a grid of \(N=100\) points.
This time we consider zero-Neumann conditions, that is, \(\partial_x u(0, t) = \partial_x u(1, t) = 0\).
Including these directly into the finite-difference discretization, the discrete Laplacian is then
\begin{equation}
  [L]_{ij} = \frac{1}{\Delta x^2} \cdot \begin{cases}
    -1 & \text{if } i = j = 1 \text{ or } i = j = d, \\
    -2 & \text{if } i = j, \\
    1 & \text{if } i = j \pm 1, \\
    0 & \text{otherwise}.
  \end{cases}
\end{equation}
The initial condition is chosen as
\begin{equation}
  u(x, 0) = \frac{1}{1 + e^{30x-10}}.
\end{equation}
The discrete ODE is then solved on a time-span \(t \in [0, 2]\), and we choose a diffusion coefficient \(D=0.25\).

\end{document}